\newtheorem{thm}{Theorem}[section]
\newtheorem*{thm-non}{Theorem}
\newtheorem{lem}[thm]{Lemma}
\newtheorem{prop}[thm]{Proposition}
\newtheorem{cor}[thm]{Corollary}
\theoremstyle{definition}
\newtheorem{defi}[thm]{Definition}
\theoremstyle{remark}
\DeclareMathOperator{\Schemes}{Schemes}
\DeclareMathOperator{\Sets}{Sets}
\DeclareMathOperator{\Spec}{Spec}
\DeclareMathOperator{\Hilb}{Hilb}
\DeclareMathOperator{\Quot}{Quot}
\DeclareMathOperator{\Sym}{Sym}
\DeclareMathOperator{\HH}{H}
\DeclareMathOperator{\Hom}{Hom}
\DeclareMathOperator{\Ext}{Ext}
\DeclareMathOperator{\HHom}{\mathcal{H}om}
\DeclareMathOperator{\tr}{tr}
\DeclareMathOperator{\rk}{rk}
\DeclareMathOperator{\supp}{supp}
\DeclareMathOperator{\sing}{sing}
\DeclareMathOperator{\lf}{lf}
\DeclareMathOperator{\Pic}{Pic}
\DeclareMathOperator{\Br}{Br}
\DeclareMathOperator{\NS}{NS}
\newcommand{\C}{\mathbb{C}}
\newcommand{\Q}{\mathbb{Q}}
\newcommand{\Z}{\mathbb{Z}}
\newcommand{\N}{\mathbb{N}}
\newcommand{\PP}{\mathbb{P}}
\newcommand{\A}{\mathcal{A}}
\newcommand{\Ahat}{\widehat{\A}}
\newcommand{\E}{\mathcal{E}}
\newcommand{\Ehat}{\widehat{E}}
\newcommand{\I}{\mathcal{I}}
\newcommand{\J}{\mathcal{J}}
\newcommand{\M}{\mathcal{M}}
\newcommand{\OO}{\mathcal{O}}
\newcommand{\Ohat}{\widehat{\OO}}
\newcommand{\m}{\mathfrak{m}}
\newcommand{\n}{\mathfrak{n}}
\newcommand{\lmod}{\text{-mod}}
\begin{document}

\title{Torsion-free rank one sheaves over del Pezzo orders}
\author{Norbert Hoffmann}
\address{Department of Mathematics and Computer Studies\\Mary Immaculate College\\South Circular
  Road\\Limerick\\Ireland}
\email{norbert.hoffmann@mic.ul.ie}

\author{Fabian Reede}
\address{Department of Mathematics and Computer Studies\\Mary Immaculate College\\South Circular
  Road\\Limerick\\Ireland}
\email{fabianreede@gmx.net}

\thanks{The second author was supported by a research fellowship of the Deutsche Forschungsgemeinschaft (DFG)}
\subjclass[2010]{14J60 (14D15, 16H10)}
\maketitle

\begin{abstract}
  Let $\A$ be a del Pezzo order on the projective plane over the field of complex numbers.
  We prove that every torsion-free $\A$-module of rank one can be deformed into
  a locally free $\A$-module of rank one.
\end{abstract}

\section*{Introduction}
An \emph{order} on an algebraic variety $X$ is a torsion-free coherent sheaf of $\OO_X$-algebras whose generic
stalk is a central division algebra over the function field of $X$. A surface together with an order on it
can be thought of as a noncommutative surface. In this article we are interested in terminal del Pezzo orders
on the projective plane $\PP^2$ over the field of complex numbers $\C$. These orders are noncommutative analogues
of classical del Pezzo surfaces and have been completely classified by D.\ Chan and C.\ Ingalls
in the course of their proof of the minimal model program for orders over surfaces, see \cite{chan2}.

Let $\A$ be a terminal del Pezzo order on $\PP^2$. Left $\A$-modules which are locally free and
generically of rank one can be thought of as line bundles on this noncommutative surface.
There is a quasi-projective coarse moduli scheme for these line bundles \cite{hst}, a noncommutative analogue
of the classical Picard scheme. To compactify this moduli scheme, that is to get a projective moduli scheme,
one has to allow torsion-free left $\A$-modules which are generically of rank one.

We study the boundary of this compactification by studying the deformation theory of torsion-free $\A$-modules.
The main result of this article is the following
\begin{thm-non}
  Let $\A \neq \OO_{\PP^2}$ be a terminal del Pezzo order on $\PP^2$ over $\C$.
  Then every torsion-free $\A$-module $E$ of rank one can be deformed to a locally free $\A$-module $E'$.	
\end{thm-non}
As a corollary, we obtain that every irreducible component of the compactification of the noncommutative Picard scheme
contains a point defined by an $\A$-line bundle.

The structure of this paper is as follows.
We review the definition and some basic facts about terminal del Pezzo orders in section \ref{sec1}.
In section \ref{sec2} we study in detail the local deformation theory of $\A$-modules in this setting.
We look at the homological algebra of torsion-free $\A$-modules and study the compactification of
the noncommutative Picard scheme and some of its properties in section \ref{sec3}.
In the final section \ref{sec4} we study the global deformation theory and prove the main result.


\section{Noncommutative del Pezzo surfaces}\label{sec1}
Let $X$ be a smooth projective surface over $\C$. 
\begin{defi}
  An \emph{order} $\A$ on $X$ is sheaf of associative $\OO_X$-algebras such that
  \begin{itemize}
   \item $\A$ is coherent and torsion-free as an $\OO_X$-module, and
   \item the stalk $\A_{\eta}$ at the generic point $\eta\in X$ is a central division ring
    over the function field $\C(X)=\OO_{X,\eta}$ of $X$.
  \end{itemize} 
\end{defi}

We can now look at all orders in $\A_{\eta}$ and order them by inclusion.
A maximal element will be called a maximal order. These are the algebras we are interested in.
Maximal orders have some nice properties, for example they are locally free $\OO_X$-modules.

Furthermore, it is well known that there is a largest open subset $U\subset X$ on which
$\A$ is even an Azumaya algebra, see for example \cite[Proposition 6.2]{tannen}.
The complement $D:=X \setminus U$ is called the ramification locus of $\A$.
It is the union of finitely many curves $C \subset X$,
and contains valuable informations about the order $\A$.

The ramification of a maximal order $\A$ can be seen in the Artin-Mumford sequence:
\begin{thm}[\normalfont{\cite[Lemma 4.1]{tannen}}]
  Let $X$ be a smooth projective surface over $\C$. Then there is a canonical exact sequence
  \begin{equation*} \begin{CD}
    0 @>>> \Br(X) @>>> \Br(\C(X))
      @>>> \bigoplus\limits_{\genfrac{}{}{0pt}{}{C \subset X}{\text{irreducible curve}}}
        \HH^1(\C(C),\Q/\Z).
  \end{CD}\end{equation*}
\end{thm}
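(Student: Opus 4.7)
The plan is to construct the ramification map on the right explicitly and then to obtain the exactness statements from two standard but substantial ingredients: Witt's description of the Brauer group of a discrete valuation ring, together with the purity theorem for Brauer groups on a smooth surface.

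First I would set up the rightmost map. For each irreducible curve $C \subset X$ with generic point $\xi_C$, the local ring $R = \OO_{X,\xi_C}$ is a DVR with fraction field $K = \C(X)$ and residue field $\kappa = \C(C)$. Witt's theorem yields a canonical residue homomorphism $\partial_C \colon \Br(K) \to \HH^1(\kappa,\Q/\Z)$, whose kernel is exactly the image of $\Br(R) \to \Br(K)$. To see that the assembled map lands in the direct sum rather than merely the product, I would argue that a given $\alpha \in \Br(K)$ can be represented by a central simple $K$-algebra spreading out to an Azumaya algebra on some affine open $U \subset X$; the complement $X \setminus U$ contains only finitely many curves, and on all other curves the residue vanishes.

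For injectivity of $\Br(X) \to \Br(K)$, I would invoke the classical Auslander--Goldman theorem: over a regular noetherian integral scheme, two Azumaya algebras are Morita equivalent if and only if their generic fibres are Brauer equivalent. Alternatively, one can appeal to the Leray spectral sequence for $\mathbb{G}_m$ along the inclusion of the generic point, combined with the vanishing of the relevant low-degree obstruction coming from $\Pic$.

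The main content is exactness at $\Br(K)$. Suppose $\alpha \in \Br(K)$ satisfies $\partial_C(\alpha) = 0$ for every irreducible curve $C$. By Witt's theorem, $\alpha$ then lies in $\Br(\OO_{X,\xi_C})$ for every codimension one point. The remaining step is to globalise this local information, that is, to establish the purity identity
\[
   \Br(X) \;=\; \bigcap_{\xi \in X^{(1)}} \Br(\OO_{X,\xi})
\]
inside $\Br(K)$. I expect this purity statement, due to Auslander and Goldman on a regular surface (and to Grothendieck--Gabber in greater generality), to be the main hurdle; once it is granted, the theorem follows immediately by combining it with Witt's calculation of the kernel of each $\partial_C$.
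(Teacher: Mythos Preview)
The paper does not give its own proof of this theorem; it is quoted as \cite[Lemma 4.1]{tannen} and used purely as background, with no argument provided. So there is nothing in the paper to compare your proposal against.

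That said, your outline is the standard one and is essentially correct. The ingredients you name --- the residue map $\partial_C$ at each codimension-one point via the Brauer group of a complete discrete valuation ring, the spreading-out argument showing almost all residues vanish, Auslander--Goldman for injectivity on the left, and purity for the Brauer group on a regular surface to obtain exactness in the middle --- are exactly what goes into the usual proof (as in Artin--Mumford or Tannenbaum). One small terminological point: the description of $\Br$ of a complete DVR in characteristic zero via $\HH^1(\kappa,\Q/\Z)$ is more commonly attributed to Serre's local class field theory or to Auslander--Brumer rather than to Witt, but the content you invoke is correct.
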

Here the Galois cohomology group $\HH^1(\C(C),\Q/\Z)$
classifies isomorphism classes of cyclic extensions of $\C(C)$.
The ramification curves are exactly the curves where the Brauer class of $\A_\eta$
has nontrivial image in $\HH^1(\C(C),\Q/\Z)$.
Thus every ramification curve $C$ comes with a finite cyclic field extension $L/\C(C)$.
The degree $e_C:=[L:\C(C)]$ is called the ramification index of $\A$ at $C$.

We are interested in a special class of maximal orders on $X$, the so called terminal orders.
To give a definition of terminal orders, let $e$, $e'$ and $f$ be positive integers such that $e'$ divides $e$.
We look at the complete local ring $R=\C[[u,v]]$ and define 
\begin{equation*}
  S:= R \langle x,y \rangle \text{ with the relations } x^{e'}=u, y^{e'}=v \text{ and } yx=\zeta xy
\end{equation*}
where $\zeta$ is a primitive $e'$-th root of unity. Then $S$ is of finite rank over $R$, the center of $S$ is $R$,
and the tensor product $S \otimes_R K$ with the field of fractions $K := \Quot( R)$ is a division ring.
Define the following $R$-subalgebra:
\begin{equation}\label{algdef}
  B := \begin{pmatrix*}
    S & \cdots & \cdots & S  \\
    xS & S & \ddots & \vdots\\
    \vdots & \ddots & \ddots & \vdots\\
    xS & \cdots & xS & S
  \end{pmatrix*} \subset M_{e/e'}(S)
\end{equation}
Then we define the $R$-algebra $A$ as a full matrix algebra over $B$:
\begin{equation} \label{eq:def_A}
  A := M_f( B).
\end{equation}
Note that the algebra $A = A_{e, e', f}$ depends on the integers $e$, $e'$ and $f$.
The following theorem describes some of its properties:
\begin{thm}[\normalfont{\cite[Proposition 2.8]{chan2}}]
  Let $A = A_{e, e', f}$ be the $R$-algebra defined by \eqref{eq:def_A}.
  \begin{itemize}
   \item[i)] $A$ has global dimension two.
   \item[ii)] If $e=e'=1$, then $A$ is unramified.
   \item[iii)] If $e>e'=1$, then $A$ is ramified on $u=0$, with ramification index $e$.
   \item[iv)] If $e'>1$, then $A$ is ramified on $uv=0$, with ramification index $e$ on $u=0$,
    and with ramification index $e'$ on $v=0$.
  \end{itemize}
\end{thm}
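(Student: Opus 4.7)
The plan is to reduce all four assertions to an analysis of $B$ alone, since the Morita equivalence $A = M_f(B) \sim B$ preserves global dimension, the center, and the Brauer class of the generic stalk; the integer $f$ therefore plays no role in (i)--(iv), and one can take $f = 1$ throughout.

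Next I would analyze $S$ separately from the tiled matrix structure. The ring $S$ is a complete skew power series ring, regular noetherian of global dimension two; when $e' = 1$ it reduces to $R$ itself, and when $e' > 1$ it is a maximal order in the cyclic division algebra $S \otimes_R K$, ramified on $uv = 0$ with index $e'$ on each branch, the local cyclic extensions at the two height-one primes being those obtained by adjoining $x$ with $x^{e'} = u$, respectively $y$ with $y^{e'} = v$. This already settles (ii): there $B = S = R$, so $A = M_f(R)$ is Azumaya.

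For (iii) and (iv) I would recognize $B \subset M_{e/e'}(S)$ as a tiled order attached to the chain of left $S$-ideals $S \supset xS \supset x^2 S \supset \dots$ In case (iii), with $S = R$ commutative, this is the classical hereditary order attached to a cyclic chain of fractional ideals over a DVR in the $u$-direction, and the standard theory of hereditary orders over surfaces identifies its ramification as supported on $u = 0$ with index equal to the length $e/e' = e$ of the chain. In case (iv) the same tile construction is applied over the noncommutative $S$; at the height-one prime $(u)$ the tile contribution combines with the cyclic ramification of $S$, so that the residue extension of $B$ at $(u)$ is the composite of the degree-$e'$ extension coming from $S$ with a totally ramified degree-$(e/e')$ extension coming from the tile, giving overall ramification index $e$. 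On $v = 0$ the tile is trivial, and the ramification index remains $e'$ inherited from $S$.

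For (i) I would either invoke the general fact that a tiled order over a regular noetherian (possibly noncommutative) ring of global dimension two inherits that dimension, or else construct projective resolutions directly: the simple $B$-modules are indexed by the $e/e'$ diagonal idempotents, and for each one a length-two resolution can be written down by combining the Koszul complex on $(x, y)$ over $S$ with the obvious shift maps between adjacent idempotent summands of $B$. The main technical obstacle I expect is the ramification calculation in (iv), namely verifying that the cyclic extension attached to $B$ at $(u)$ is genuinely the degree-$e$ composite and not a proper subfield; this reduces to a careful analysis of $B_{(u)}$ as a maximal order over the DVR $R_{(u)}$ and the identification of its residue ring, after which everything else follows from standard homological facts about maximal orders on surfaces.
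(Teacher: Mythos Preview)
The paper does not prove this theorem at all: it is stated with the attribution \cite[Proposition 2.8]{chan2} and is immediately followed by Definition~\ref{locstr}, with no intervening proof. The result is simply quoted from Chan--Ingalls as background for the definition of terminal orders, so there is no argument in the paper against which to compare your proposal.

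That said, your outline is a reasonable sketch of how one would actually establish the result. The Morita reduction to $B$, the identification of $S$ as a regular local ring of global dimension two (the skew power series ring), the recognition of $B$ as a tiled order over $S$ attached to the chain $S \supset xS \supset \cdots$, and the case split according to $e'$ are all the right ingredients. Your anticipated difficulty in (iv)---checking that the residue extension at $(u)$ has full degree $e$ rather than a proper divisor---is indeed where the work lies, and your plan to analyze $B_{(u)}$ as an order over the DVR $R_{(u)}$ is the standard route. For (i), constructing explicit length-two projective resolutions of the simples is exactly what Chan--Ingalls do. So your approach is essentially the one in the cited source, but there is nothing in the present paper to compare it to.
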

\begin{defi}[\normalfont{\cite[Corollary 4.3]{chan2}}]\label{locstr}
  A maximal order $\A$ on a smooth projective surface $X$ over $\C$ is called \emph{terminal}
  if and only if for every closed point $p \in X$ there is
  \begin{itemize}
   \item an isomorphism of complete local rings $\Ohat_{X,p} \cong \C[[u,v]]$, and
   \item a $\C[[u,v]]$-algebra isomorphism $\A_p \otimes \Ohat_{X,p} \cong A_{e, e', f}$
    for some integers $e$, $e'$ and $f$.
  \end{itemize}
\end{defi}
\begin{defi}[\normalfont{\cite[Lemma 8]{chan5}}]
  Assume $\A$ is a terminal order on a smooth projective surface $X$ over $\C$,
  with ramification curves $\left\{C_i\right\}$ and ramification indices $\left\{e_i\right\}$.
  Then we define the canonical divisor class $K_\A$ of $\A$ by: 
  \begin{equation*}
    K_\A=K_X+\sum(1-\frac{1}{e_i})C_i.
  \end{equation*}
\end{defi}
\begin{lem} \label{lem:K_A}
  If $\A$ is a terminal order on a smooth projective surface $X$ over $\C$, then
  \begin{equation*}
    K_\A=K_X - \dfrac{2 c_1( \A)}{\rk( \A)}.
  \end{equation*}
\end{lem}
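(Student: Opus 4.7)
After substituting the definition of $K_\A$, the lemma is equivalent to the identity
\begin{equation*}
  c_1(\A) = -\frac{\rk(\A)}{2} \sum_i \left(1 - \frac{1}{e_i}\right) [C_i] \in \NS(X) \otimes \Q.
\end{equation*}
My plan is to establish this by comparing $\A$ with its $\OO_X$-dual $\A^\vee := \HHom_{\OO_X}(\A, \OO_X)$ via the reduced trace pairing. Since $\A_\eta$ is central simple over $\C(X)$, the reduced trace $\tr_{\mathrm{red}} : \A \to \OO_X$ is well defined and induces a morphism $\phi : \A \to \A^\vee$ with $\phi(a)(b) = \tr_{\mathrm{red}}(ab)$. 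On the Azumaya locus $U = X \setminus D$, where $\A$ is \'etale-locally a matrix algebra, this pairing is nondegenerate in characteristic zero, so $\phi|_U$ is an isomorphism; hence the cokernel $Q := \A^\vee / \phi(\A)$ is a torsion sheaf supported on the ramification divisor $D = \bigcup_i C_i$. Taking first Chern classes of $0 \to \A \xrightarrow{\phi} \A^\vee \to Q \to 0$ and using $c_1(\A^\vee) = -c_1(\A)$ yields $c_1(Q) = -2\,c_1(\A)$.

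The next step is to show that the length of $Q$ at the generic point of each $C_i$ equals $\rk(\A)(1 - 1/e_i)$. For this I would pass to the completion at a closed point $p \in C_i$ not lying on any other ramification curve; by Definition \ref{locstr}, $\A_p \otimes \Ohat_{X,p}$ is then isomorphic as a $\C[[u,v]]$-algebra to $A_{e_i, 1, f_i} = M_{f_i}(B)$, where $B \subset M_{e_i}(\C[[u,v]])$ consists of matrices whose entries on and above the diagonal lie in $\C[[u,v]]$ and whose entries strictly below the diagonal lie in $u\,\C[[u,v]]$, with $\{u = 0\}$ corresponding to $C_i$. A direct computation with the matrix trace pairing identifies $B^\vee \subset M_{e_i}(\C[[u,v]][u^{-1}])$ as the set of matrices with entries in $\C[[u,v]]$ on and below the diagonal and entries in $u^{-1}\C[[u,v]]$ strictly above. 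Therefore $B^\vee / B$ is a direct sum of $e_i(e_i - 1)$ copies of $\C[[u,v]]/(u)$, and $M_{f_i}(B)^\vee / M_{f_i}(B) = M_{f_i}(B^\vee / B)$ has length $f_i^2 e_i(e_i - 1) = \rk(\A)(1 - 1/e_i)$ at the generic point of $\{u = 0\}$, as required.

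Combining the two steps gives $c_1(Q) = \rk(\A) \sum_i (1 - 1/e_i)[C_i]$ and hence the desired formula for $c_1(\A)$; plugging back into the definition of $K_\A$ completes the proof. The main obstacle will be the explicit identification of $B^\vee$ under the trace pairing, tracking how the $uR$-entries below the diagonal of $B$ produce $u^{-1}R$-entries above the diagonal of $B^\vee$. A minor subtlety is that the reduced trace differs from the ordinary $R$-linear endomorphism trace by the PI-degree $e_i f_i$, but this factor is a unit in $\C[[u,v]]$, so the two pairings yield the same dual up to isomorphism.
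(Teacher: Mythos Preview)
Your argument is correct, and in fact it proves strictly more than the paper does: the paper's proof of this lemma is a one-line citation of \cite[Theorem~1.83]{reede} for the identity $c_1(\A) = -\tfrac{\rk(\A)}{2}\sum_i(1-1/e_i)[C_i]$, which is precisely the statement you reduce to in your first line. Your approach---comparing $\A$ with $\A^\vee$ via the reduced trace and computing the cokernel locally---gives a self-contained derivation of that cited formula. It also meshes well with the paper, since the explicit description of $B^\vee$ you need is worked out verbatim in Section~\ref{sec2} (the display just before \eqref{eq:b^*}); your length count $f^2 e(e-1) = \rk(\A)(1-1/e)$ then follows immediately. The only point I would make more explicit is that $c_1(Q)$ is determined by the length of $Q$ at the \emph{generic} points of the $C_i$, which is why it suffices to complete at a smooth ramification point and the singular points of $D$ (where $e'>1$) never enter; you do arrange this by choosing $p \in C_i$ off the other components, but it is worth saying why that is enough.
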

\begin{proof}
  Theorem 1.83 in \cite{reede} states that $\displaystyle c_1( \A) = - \frac{\rk( \A)}{2} \sum(1-\frac{1}{e_i})C_i$.
\end{proof}

\begin{defi}[\normalfont{\cite[Definition 7, Lemma 8]{chan5}}]
  A terminal order $\A$ on a smooth projective surface $X$ over $\C$
  is called a \emph{del Pezzo order} if $-K_\A$ is ample.
\end{defi}
If $\A$ is a terminal del Pezzo order on $\PP^2$, then its ramification is rather limited:
\begin{prop}[\normalfont{\cite[Proposition 3.21]{chan2}}]\label{prop:delpez}
  Assume $\A$ is a terminal del Pezzo order on $\PP^2$ with ramification locus $D=\bigcup C_i$
  and ramification indices $\left\{e_i\right\}$. Then all ramification indices $e_i$ are equal,
  and we have $3\leq deg(D)\leq 5$.
\end{prop}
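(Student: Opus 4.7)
The plan is to combine the numerical information from Lemma~\ref{lem:K_A} with the local classification of Definition~\ref{locstr} at points where two ramification curves meet. Since $\Pic(\PP^2) = \Z\cdot H$, ampleness of $-K_\A$ reduces to a single scalar inequality, which together with the bound $e_i \geq 2$ quickly yields the upper bound on $\deg(D)$.

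First I would compute $-K_\A$ explicitly in $\Pic(\PP^2)$. Using $K_{\PP^2} = -3H$ and $C_i \sim d_i H$ with $d_i := \deg(C_i)$, Lemma~\ref{lem:K_A} gives
$$-K_\A \;=\; \Bigl(3 - \sum_i \bigl(1-\tfrac{1}{e_i}\bigr)\, d_i\Bigr)\, H \quad\text{in } \Pic(\PP^2).$$
Ampleness of $-K_\A$ is therefore equivalent to
$$\sum_i \bigl(1-\tfrac{1}{e_i}\bigr)\, d_i \;<\; 3. \qquad (*)$$
Because each $e_i \geq 2$ contributes a coefficient at least $1/2$, $(*)$ immediately yields $\deg(D) = \sum_i d_i \leq 5$.

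The next step is to force all $e_i$ equal. Any two distinct curves in $\PP^2$ meet, so for $i \neq j$ one can pick $p \in C_i \cap C_j$; by Definition~\ref{locstr}, $\A_p \otimes \Ohat_{\PP^2,p} \cong A_{e,e',f}$ with $e' > 1$ and $e' \mid e$, and the two local branches of the ramification divisor at $p$ carry indices $\{e, e'\}$. Hence in the pair $(e_i, e_j)$ one index divides the other, so the distinct values among the $e_i$ form a chain under divisibility. I would then exploit this against $(*)$ and $\deg(D) \leq 5$: the number of admissible configurations $(d_i, e_i)$ with $d_i \geq 1$ is finite, and a direct case check rules out every configuration using more than one value of $e_i$, either by violating $(*)$ or by being incompatible with the requirement that each pair of curves on $\PP^2$ share an intersection point.

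Finally, for the lower bound $\deg(D) \geq 3$ I would invoke the Artin-Mumford sequence. Since $\PP^2$ is rational, $\Br(\PP^2) = 0$, so $[\A_\eta]$ is determined by its ramification; the hypothesis $\A \neq \OO_{\PP^2}$ forces nontrivial residues and therefore $\deg(D) \geq 1$. To exclude $\deg(D) \in \{1,2\}$ I would use purity together with the fact that the complement of a line in $\PP^2$ is $\mathbb{A}^2$, whose Brauer group vanishes; the complements of a conic or of a pair of lines admit only restricted Brauer classes that cannot arise from a terminal order with $e_i \geq 2$, ruling out these cases. The main obstacle I expect is the equality-of-indices step: the divisibility relations at intersection points alone are not strong enough, and one really has to combine $(*)$, the bound $\deg(D) \leq 5$, and the fact that every two ramification curves on $\PP^2$ must meet, in order to exclude the mixed-index configurations.
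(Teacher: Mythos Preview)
The paper does not give a proof of this proposition; it simply quotes \cite[Proposition 3.21]{chan2} and moves on. So there is no argument in the paper to compare against, and your proposal has to be judged on its own.

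Your numerical computation is fine: on $\PP^2$ the del Pezzo condition is exactly
\[
  \sum_i\Bigl(1-\tfrac{1}{e_i}\Bigr)d_i < 3,
\]
and together with $e_i\geq 2$ this gives $\deg(D)\leq 5$. (Minor point: the formula you use is the \emph{definition} of $K_\A$, not Lemma~\ref{lem:K_A}.)

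The real problem is the equality-of-indices step. You correctly observe that at a node $p\in C_i\cap C_j$ the local model $A_{e,e',f}$ forces one of $e_i,e_j$ to divide the other, so the distinct values form a chain. But the combination you propose---$(*)$, $\deg(D)\leq 5$, pairwise divisibility, and the fact that any two curves in $\PP^2$ meet---does \emph{not} rule out mixed indices. For instance, take $C_1$ a line with $e_1=2$ and $C_2$ a smooth conic with $e_2=4$: then $2\mid 4$, the curves meet in two points, $\deg(D)=3\leq 5$, and
\[
  \tfrac{1}{2}\cdot 1 + \tfrac{3}{4}\cdot 2 = 2 < 3,
\]
so every one of your constraints is satisfied. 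Many other mixed configurations (e.g.\ $(d_1,e_1)=(2,2)$, $(d_2,e_2)=(2,4)$, or three distinct indices $2\mid 4\mid 8$ on three lines) pass as well. What actually excludes these in Chan--Ingalls is the \emph{secondary} ramification data carried by the cyclic covers $\widetilde{C_i}\to C_i$: for a terminal order these covers are ramified only at the nodes of $D$, and the reciprocity constraints coming from the next term of the Artin--Mumford sequence tie the local monodromies at the nodes together. Your case check cannot succeed without bringing this extra structure into play.

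The lower bound $\deg(D)\geq 3$ is also not settled by what you wrote. The $\Br(\mathbb{A}^2)=0$ trick disposes of a single line, but for a smooth conic or a pair of lines you would need to know the Brauer groups of their complements, which you have not computed. The cleaner route is again the secondary ramification: for a terminal order the cyclic cover of each $C_i$ is unramified away from the nodes of $D$, and a nontrivial cyclic cover of $\PP^1$ needs at least two branch points, which immediately kills $\deg(D)\in\{1,2\}$.
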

Furthermore there are more constraints for the common ramification index $e \in \N$
depending on the degree of $D$, see for example \cite[Proposition 3.21]{chan2}.

\section{Punctual deformations of rank one modules}\label{sec2}
In this section we study the local situation. That is we replace the surface $X$ over $\C$
by the complete local ring $R = \C[[u,v]]$, and the terminal order $\A$ on $X$
by the $R$-algebra
\begin{equation*}
  A = A_{e, e', f}
\end{equation*}
defined in \eqref{eq:def_A}. The role of dualizing sheaf will be played by the $A$-bimodule
\begin{equation*}
  A^* := \Hom( A, R).
\end{equation*}
Left ideals $I \subset A$ of $R$-colength $l < \infty$ are parameterized by the punctual Hilbert scheme
\begin{equation*}
  \Hilb_A( l),
\end{equation*}
which is a closed subscheme of the punctual Quot-scheme $\Quot_R(A,l)$ and hence projective over $\C$.
We say that $I$ can be \emph{deformed} to another left ideal $I' \subset A$ if
$I'$ has the same colength $l < \infty$, and lies in the same connected component of $\Hilb_A( l)$.

Equivalently, $I \subset A$ can be deformed to $I' \subset A$ if and only if
there is a sheaf of left ideals $\I \subset A_T := A \otimes_{\C} \OO_T$ for some connected scheme $T$ over $\C$
such that $A_T/\I$ is flat over $\OO_T$,
and $\I$ has fibers $\I_t = I$ and $\I_{t'} = I'$ for some points $t, t' \in T( \C)$.

We consider three different cases, depending on the ramification of $A$.

\subsection{No ramification: $e = e' = 1$}
In this case, $A = M_f( R)$ is a full matrix algebra over $R = \C[[u,v]]$. We assume $f > 1$.

\begin{lem} \label{lem:unram_I'}
  Every proper left ideal $I \subset A$ of finite colength can be deformed to
  a proper left ideal $I' \subset A$ of finite colength such that $I'A^* \not\subseteq A^*I'$.
\end{lem}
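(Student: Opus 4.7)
The plan is to reduce the bimodule condition to a question about two-sidedness via the trace pairing, translate the problem via Morita equivalence, and then exhibit an explicit deformation.

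Since $A = M_f(R)$ is a matrix algebra, the reduced trace $\tr \colon A \to R$ induces a non-degenerate $R$-bilinear pairing on $A$, and hence an isomorphism of $A$-bimodules $\phi \colon A \xrightarrow{\sim} A^*$ given by $a \mapsto \tr(a \cdot -)$. Under $\phi$, one has $A^* \cdot I' = \phi(A \cdot I') = \phi(I')$ (using $A \cdot I' = I'$ for a left ideal) and $I' \cdot A^* = \phi(I' \cdot A)$. Therefore the condition $I' A^* \not\subseteq A^* I'$ is equivalent to $I' A \not\subseteq I'$, i.e., $I'$ is not a two-sided ideal of $A$. If $I$ is already not two-sided, take $I' = I$. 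Otherwise, the classification of two-sided ideals in a matrix algebra gives $I = M_f(J)$ for some finite-colength ideal $J \subset R$, and we must deform $I$ to a left ideal not of this form.

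Via the Morita equivalence $P \mapsto e_{11} P$ between left $A$-modules and $R$-modules, the ideal $M_f(J)$ corresponds to the split $R$-submodule $J^{\oplus f} \subset R^f$, and two-sided ideals $M_f(J')$ correspond to split submodules $(J')^{\oplus f}$. Setting $m = \dim_\C R/J$, the task reduces to constructing a flat 1-parameter family of $R$-submodules $M_t \subset R^f$ with $M_0 = J^{\oplus f}$, constant colength $fm$, and $M_t$ not split for $t \neq 0$. Choose a minimal generator $g \in J \setminus \m J$, complete it to a minimal generating set $g, g_2, \dots, g_r$ of $J$, and set $J' = (g_2, \dots, g_r)$. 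Also choose $h \in (J : \m) \setminus J$, which exists because the nonzero Artinian module $R/J$ has nonzero socle. For $t \in \C$ define
\begin{equation*}
  M_t \; := \; R \cdot (g e_2 + t h e_1) \;+\; \sum_{(k,j) \neq (1,2)} R \cdot g_k e_j \;\subset\; R^f,
\end{equation*}
where $e_1, \ldots, e_f$ is the standard basis of $R^f$. At $t = 0$ this recovers $J^{\oplus f}$, while for $t \neq 0$ the generator $g e_2 + t h e_1$ lies outside $J^{\oplus f}$ (as $h \notin J$).

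The main technical point is that $M_t$ has constant $\C$-colength $fm$ in $R^f$, so that the family is flat over $\C[t]$. A direct computation of annihilators in $Q_t := R^f/M_t$ shows: the annihilator of $\bar e_j$ equals $J$ for $j \neq 2$ (using that $g \notin J'$ forces $(J':g) \subset \m$, while $h \in (J : \m)$ gives $h \m \subset J$), and the annihilator of $\bar e_2$ equals $g\m + J'$ (using $(J:h) = \m$, which follows from $h \in (J : \m) \setminus J$). The coupling relation $g \bar e_2 = -t h \bar e_1$ creates a 1-dimensional overlap between $R \bar e_1$ and $R \bar e_2$ in $Q_t$, so a dimension count yields $\dim_\C Q_t = m + (m+1) + (f-2)m - 1 = fm$, independent of $t$. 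Finally, for $t \neq 0$ the explicit description gives $M_t \cap R e_1 = J e_1$ while $M_t \cap R e_2 = (g\m + J') e_2 \subsetneq J e_2$, precluding $M_t$ from having the split form $(J'')^{\oplus f}$. Applying the inverse Morita functor to $M_t$ then produces the desired deformation of left $A$-ideals from $I$ to $I'$; the main obstacle is the colength computation, which is why the careful choice of the socle element $h$ is essential.
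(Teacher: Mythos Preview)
Your reduction via the trace pairing $A \cong A^*$ to the statement ``$I'$ is not two-sided'' is exactly the paper's reduction. After that, however, the two arguments diverge.

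The paper does not build an explicit family. It passes via Morita to a submodule $M \subset R^f$ of colength $l$, picks any ideal $J_0 \subset R$ of colength $l$, and sets $M' := J_0 \oplus R^{f-1}$. The deformation from $M$ to $M'$ comes for free from the irreducibility of the punctual Quot scheme $\Quot_R(R^f, l)$, quoted from Ellingsrud--Lehn. Since $f > 1$, $M'$ is visibly not of the form $(J')^{\oplus f}$, so the corresponding $I'$ is not two-sided. This is a two-line argument once the Quot irreducibility is granted.

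Your approach instead treats only the case $I = M_f(J)$ (taking $I' = I$ otherwise) and constructs an explicit one-parameter family by perturbing a single generator $g e_2 \rightsquigarrow g e_2 + t h e_1$ with a socle element $h$. The colength computation is correct: your annihilator calculations for $\bar e_1$ and $\bar e_2$ go through (the key inputs $(J':g) \subset \m$ and $(J:h) = \m$ are exactly what is needed), and the overlap $R\bar e_1 \cap R\bar e_2 = J\bar e_2 \cong J/(J'+g\m)$ is indeed one-dimensional. A slightly quicker way to see $\dim_\C R^2/N_t = 2m$ is the filtration $0 \to R\bar e_1 \to R^2/N_t \to R/J \to 0$, but your count is fine. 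Flatness over $\mathbb{A}^1$ then follows from constant fibre dimension, noting that $\m^N R^f \subset M_t$ for $N \gg 0$ uniformly in $t$, so the family lives in a finite free $\C[t]$-module.

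What each approach buys: the paper's proof is much shorter but imports a nontrivial result about Quot schemes; yours is longer and more computational but entirely self-contained, which may be preferable if one wants to avoid that citation.
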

\begin{proof}
  The left ideal $I \subset A$ is Morita equivalent to an $R$-submodule $M \subset R^f$ of some colength $l < \infty$.
  Choose an ideal $J \subset R$ of colength $l$. Then the $R$-submodule
  \begin{equation} \label{eq:M'}
    M' := J \oplus R^{f-1} \subset R^f
  \end{equation}
  is Morita equivalent to some left ideal $I' \subset A$. Since the punctual Quot-scheme
  \begin{equation*}
    \Quot_R(R^f, l)
  \end{equation*}
  is irreducible according to \cite[Proposition 6]{lehn2}, $M \subset R^f$ can be deformed to $M' \subset R^f$.
  Therefore $I \subset A$ can be deformed to $I' \subset A$. It remains to prove $I'A^* \not\subseteq A^*I'$.

  Assume for contradiction that $I'A^* \subseteq A^*I'$. Then $I'A \subseteq AI'$,
  because $A^* \cong A$ as $A$-bimodules by means of the trace form $A \otimes_R A \to R$.
  Hence $I'$ is a two-sided ideal. Consequently, $I' = M_f( J')$ for some ideal $J' \subset R$. Therefore,
  \begin{equation*}
    M' = (J')^f \subset R^f.
  \end{equation*}
  Since $f > 1$, this contradicts \eqref{eq:M'}. Hence indeed $I'A^* \not\subseteq A^*I'$.
\end{proof}

\subsection{Smooth ramification: $e > e' = 1$}
In this case, our algebra $A = A_{e, 1, f}$ over $R = \C[[u,v]]$ is ramified over $u = 0$,
with ramification index $e$. Explicitly, we have $A = M_f( B)$ for
\begin{equation} \label{eq:B}
  B=\begin{pmatrix*}
    R & \cdots & \cdots & R  \\
    uR & R & \ddots & \vdots\\
    \vdots & \ddots & \ddots & \vdots\\
    uR & \cdots & uR & R
  \end{pmatrix*} \subset M_e(R).
\end{equation}
The aim of this subsection is to prove an analogue of Lemma \ref{lem:unram_I'} in this situation.

We have $A^* = M_f( B^*)$ for the $B$-bimodule $B^* := \Hom_R( B, R)$. The trace map
\begin{equation*}
  \tr: B_K := B \otimes_K R = M_e( K) \to K
\end{equation*}
allows us to identify $B^*$ with the set of all $b \in B_K$ for which $\tr( bB) \subseteq R$; explicitly,
\begin{equation*}
  B^* = \begin{pmatrix*}
    R & u^{-1}R & \cdots & u^{-1}R\\
    R & R & \ddots & \vdots\\
    \vdots & \vdots & \ddots & u^{-1}R\\
    R & R & \cdots & R
  \end{pmatrix*} \subset B_K = M_e( K).
\end{equation*}
In particular, $B^* = b^* B = B b^*$ and $A^* = b^* A = A b^*$ for the matrix
\begin{equation} \label{eq:b^*}
  b^* := \begin{pmatrix*}
    0 & u^{-1} & 0 & \cdots & 0 \\
    \vdots & 0 & u^{-1} & \ddots & \vdots\\
    \vdots & \ddots & \ddots & \ddots & 0\\
    0 & \ddots & \ddots & 0 & u^{-1}\\
    1 & 0 & \cdots & \cdots & 0
  \end{pmatrix*} \in B^*,
\end{equation}
where elements of $A = M_f( B)$ are multiplied componentwise by $b^* \in B^*$. 

We see from \eqref{eq:B} that $B$ has exactly $e$ two-sided maximal ideals $\m_i$,
given by replacing $R$ by its maximal ideal $\m$ in the diagonal entry $(i,i)$ respectively.
So there are also exactly $e$ non-isomorphic simple $B$-modules $S_i := B/\m_i$. We have
\begin{equation*}
  B^* \otimes_B S_1 \cong S_e \quad\text{and}\quad B^* \otimes_B S_i \cong S_{i-1} \quad\text{for } i \geq 2,
\end{equation*}
because $b^* \m_1 = \m_e b^*$ and $b^* \m_i = \m_{i-1} b^*$ for $i \geq 2$, as is easily checked.
Using Morita equivalence, we see that there are $e$ simple left $A$-modules, all of $R$-length $f$.

\begin{cor}\label{hilbsmo}
$Hilb_A(l)$ is nonempty if and only if $f$ divides $l$.
\end{cor}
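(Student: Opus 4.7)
The plan is to reduce the question to the ring $B$ via Morita equivalence and to exploit the fact, established in the preceding paragraph, that every simple left $B$-module $S_i = B/\m_i$ is one-dimensional over $\C$. Morita equivalence identifies left $A$-ideals of finite colength with left $B$-submodules $M \subset B^f$ of finite colength, and it multiplies the $\C$-dimensions of the quotients by $f$, because each of the $e$ simple left $A$-modules has $R$-length $f$ while each simple left $B$-module has $R$-length one.

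For the ``only if'' direction, I would take any $I \in \Hilb_A(l)$ and look at a composition series of the finite length $A$-module $A/I$. Each composition factor is one of the $e$ simple $A$-modules and thus contributes exactly $f$ to the total $R$-length $l$, so $f$ divides $l$.

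For the ``if'' direction, write $l = kf$ and aim to construct a left ideal $J_k \subset B$ of $\C$-colength $k$; then $M := J_k \oplus B^{f-1} \subset B^f$ corresponds under Morita to a left $A$-ideal of colength $fk = l$. The ideal $J_k$ is produced by iterated descent: set $J_0 := B$, and at each step observe that $J_{i-1}$ is a nonzero finitely generated left $B$-module (nonzero because $\dim_\C B/J_{i-1} = i-1$ is finite while $\dim_\C B$ is not), so Nakayama's lemma supplies a simple quotient of $\C$-dimension one; the kernel of this projection provides $J_i \subsetneq J_{i-1}$ with $\dim_\C J_{i-1}/J_i = 1$.

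The argument is short, and the only point requiring care is the behaviour of $\C$-dimension under Morita equivalence; once the factor of $f$ is accounted for, both implications follow almost formally from standard structure theory of left modules over the semilocal ring $B$.
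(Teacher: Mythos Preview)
Your argument is correct. The paper itself gives no proof for this corollary, treating it as immediate from the sentence just before it (``there are $e$ simple left $A$-modules, all of $R$-length $f$''): a composition series of $A/I$ shows $f \mid l$, and conversely one obtains ideals of colength $kf$ by iteratively passing to the kernel of a simple quotient of $A$, then of that kernel, and so on.

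Your route through Morita equivalence to $B$ is perfectly valid but is a detour for the ``if'' direction: the same Nakayama-style descent you perform inside $B$ can be carried out directly inside $A$, since the only fact used is that every simple left $A$-module has $R$-length $f$. The Morita reduction buys nothing extra here, though it does no harm.
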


\begin{lem} \label{lem:two-sided}
  Let $I \subset A$ be a left ideal such that $IA^* \subseteq A^*I$. Then $I$ is a two-sided ideal.
  In particular, $I = M_f( J)$ for some two-sided ideal $J \subset B$ such that $JB^* \subseteq B^*J$.
\end{lem}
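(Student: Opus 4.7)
The plan is to show first that the hypothesis $IA^* \subseteq A^*I$ forces $I$ to be stable under right multiplication by $A$, so that $I$ is two-sided, and then to transfer the statement to $B$ via the identification $A = M_f(B)$. Working inside $A_K := A \otimes_R K$ and using $A^* = Ab^* = b^*A$ together with the left-ideal property $AI \subseteq I$, one computes $IA^* = (IA)b^*$ and $A^*I = Ab^*I = b^*AI \subseteq b^*I \subseteq A^*I$, so $A^*I = b^*I$. The hypothesis therefore rewrites as $(IA)b^* \subseteq b^*I$, i.e., $b^{*-1}(IA)b^* \subseteq I$. The relation $b^*B = Bb^*$ gives $b^{*-1}Bb^* = B$, and since $b^*$ acts on $A = M_f(B)$ entrywise, this lifts to $b^{*-1}Ab^* = A$. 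Setting $J_n := b^{*-n}Ib^{*n}$, the hypothesis then becomes $J_1 A \subseteq I$; in particular $J_1 \subseteq I$.

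Now iterate: conjugating $J_1 \subseteq I$ repeatedly by $b^{*-1}$ yields a descending chain $I = J_0 \supseteq J_1 \supseteq J_2 \supseteq \dots$. A direct computation from \eqref{eq:b^*} shows $(b^*)^e = u^{-(e-1)} \cdot 1_A$, a central element of $A_K$, so $J_e = I$. The chain $I = J_e \subseteq \dots \subseteq J_1 \subseteq J_0 = I$ collapses to equalities, forcing $J_1 = I$; plugging this back into $J_1 A \subseteq I$ gives $IA \subseteq I$, so $I$ is two-sided.

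Every two-sided ideal of $A = M_f(B)$ has the form $M_f(J)$ for a unique two-sided ideal $J \subseteq B$, so write $I = M_f(J)$. Under $A^* = M_f(B^*)$, matrix multiplication gives $M_f(X)M_f(Y) = M_f(XY)$ for $R$-submodules $X, Y \subseteq B_K$, hence $IA^* = M_f(JB^*)$ and $A^*I = M_f(B^*J)$; the hypothesis thus translates to $JB^* \subseteq B^*J$, as required.

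The main obstacle is recognizing that the bimodule hypothesis secretly encodes a conjugation condition on $I$, and that the fact $(b^*)^e \in R \cdot 1_A$ is precisely what forces the iteration $J_n$ to close up after $e$ steps; once this cyclic structure is in hand, the rest is formal bookkeeping with bimodule inclusions.
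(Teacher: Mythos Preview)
Your proof is correct and takes a genuinely different route from the paper's. The paper defines the $R$-linear map $\phi: A/I \to A^*/A^*I$, $a + I \mapsto ab^* + A^*I$; it is well-defined because $Ib^* \subseteq IA^* \subseteq A^*I$, and surjective because $Ab^* = A^*$. Since left multiplication by the invertible element $b^*$ identifies $A/I$ with $b^*A/b^*I = A^*/A^*I$ as finitely generated $R$-modules, the theorem that a surjective endomorphism of a finitely generated module is bijective \cite[Theorem 2.4]{mats} forces $\phi$ to be injective, and injectivity applied to $ia + I$ with $i \in I$, $a \in A$ gives $IA \subseteq I$ directly. Your argument instead unpacks the hypothesis as the conjugation condition $b^{*-1}(IA)b^* \subseteq I$ and iterates, using the explicit identity $(b^*)^e = u^{-(e-1)} \cdot 1$ to make the descending chain $J_n = b^{*-n} I b^{*n}$ close up after $e$ steps. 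The paper's approach is shorter and uses only that $b^*$ is invertible with $A^* = Ab^* = b^*A$, so it would go through whenever $A^*$ is an invertible bimodule; your approach is self-contained, avoids the appeal to \cite{mats}, and makes the cyclic structure of $b^*$ (specific to this order) do the work.
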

\begin{proof}
  Let $b^* \in B^*$ still be the matrix given by \eqref{eq:b^*}. The finitely generated $R$-modules
  $A/I$ and $A^*/A^* I = b^* A/b^* I$ are isomorphic, as $b^*$ is invertible in $M_e( K)$. The $R$-linear map
  \begin{equation*}
    \phi: A/I \to A^*/A^* I, \qquad a + I \mapsto ab^* + A^* I
  \end{equation*}
  is well-defined since $I b^* \subseteq A^* I$ by assumption, and surjective since $Ab^* = A^*$.
  Therefore, $\phi$ is also injective, according to \cite[Theorem 2.4]{mats}. Since $\phi$ is by definition
  $A$-linear from the left, and $I A^* \subseteq A^* I$ by assumption, we conclude that $I A \subseteq I$.
\end{proof}

\begin{lem} \label{lem:J}
  Let $J \subset B$ be a left ideal such that $JB^* \subseteq B^*J$. Then
  \begin{equation} \label{eq:J}
    J=\begin{pmatrix*}
      J_e & J_{e-1} & \cdots & J_1 \\
      u J_1 & J_e & \ddots & \vdots\\
      \vdots & \ddots & \ddots & J_{e-1}\\
      u J_{e-1} & \cdots & u J_1 & J_e
    \end{pmatrix*} \subset M_e( R)
  \end{equation}
  for some chain of ideals $R \supseteq J_1 \supseteq J_2 \supseteq \dots \supseteq J_e$ with $J_e \supseteq u J_1$.
\end{lem}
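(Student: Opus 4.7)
The plan is to first upgrade the hypothesis to full two-sidedness, then decompose $J$ along the columns of $B$, and finally glue the columns together using the cyclic symmetry coming from conjugation by $b^{*}$.

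Since $B^{*}=Bb^{*}=b^{*}B$ with $b^{*}$ invertible in $B_K=M_e(K)$, the hypothesis $JB^{*}\subseteq B^{*}J$ is equivalent to $Jb^{*}\subseteq b^{*}J$, i.e., $b^{*-1}Jb^{*}\subseteq J$. Moreover, conjugation by $b^{*}$ stabilises $B$ (because $b^{*}Bb^{*-1}=B^{*}b^{*-1}=B$), so it defines an $R$-algebra automorphism $\phi$ of $B$. The same surjective-endomorphism argument used in Lemma~\ref{lem:two-sided}, specialised to the $R$-linear map $B/J \to B^{*}/B^{*}J$ sending $m+J$ to $mb^{*}+B^{*}J$, applies: the map is well-defined by hypothesis, surjective since $Bb^{*}=B^{*}$, and forced to be an isomorphism by Matsumura's theorem. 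From this isomorphism one reads off both $\phi(J)=J$ and the fact that $J$ is in fact a \emph{two-sided} ideal of $B$. This is the main obstacle of the argument: without two-sidedness, $J$ need not split as a direct sum along its column projections, and the subsequent classification fails.

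With two-sidedness established, $J = \bigoplus_{j=1}^{e} J E_{jj}$, and each $J E_{jj} = J \cap P_j$ is a left $B$-submodule of the column module $P_j := B E_{jj}$. I classify left $B$-submodules of $P_j$ directly. Identifying $P_j$ coordinatewise with $R^{j}\oplus (uR)^{e-j}$, the diagonal idempotents $E_{kk}$ force any such submodule to split as $V_1 \oplus \cdots \oplus V_e$, and closure under $y := u b^{*} \in B$, which acts on $P_j$ by the twisted cyclic shift $(v_1,\ldots,v_e) \mapsto (v_2,\ldots,v_e,uv_1)$, produces the chain condition $V_1 \supseteq V_2 \supseteq \cdots \supseteq V_e \supseteq uV_1$.

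Writing $V_{k,j}$ for the $k$-th coordinate of $J \cap P_j$, a direct computation using the explicit forms of $b^{*}$ and $b^{*-1}$ shows that $\phi$ sends $P_j$ to $P_{j-1}$ (indices modulo $e$) and, on entries, shifts rows upward, introducing a factor of $u$ or $u^{-1}$ precisely at each wrap-around. Combined with $\phi(J)=J$, this yields $V_{k,j-1} = V_{k+1,j}$ for $k<e$, $j \geq 2$, together with $V_{e,j-1} = u V_{1,j}$ and the analogous wrap $V_{k,e} = u^{-1} V_{k+1,1}$, $V_{e,e} = V_{1,1}$. Setting $J_k := V_{k,e}$, these identifications collapse every $V_{i,j}$ to $J_{e-(j-i)}$ for $j \geq i$ and $u J_{i-j}$ for $j < i$, which is precisely the matrix shape stated in the lemma; the chain $R \supseteq J_1 \supseteq \cdots \supseteq J_e \supseteq uJ_1$ is exactly the chain condition for the column $j = e$.
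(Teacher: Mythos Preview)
Your argument is correct and follows essentially the same route as the paper. Both proofs first invoke Lemma~\ref{lem:two-sided} (the Matsumura trick) to obtain two-sidedness, then use the diagonal idempotents to decompose $J$ entrywise, and finally use the $b^{*}$-symmetry together with left multiplication by the shift element to pin down the cyclic shape and the chain $J_1\supseteq\cdots\supseteq J_e\supseteq uJ_1$. The only organisational difference is that the paper writes $J$ directly as a matrix of ideals $J_{i,j}$ and observes that the single inclusion $Jb^{*}\subseteq b^{*}J$ already yields a closed cycle of inclusions among the $J_{i,j}$ along each generalised diagonal, forcing equalities; you instead extract the stronger statement $\phi(J)=J$ from the same Matsumura isomorphism and then transport one column to the next. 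These are equivalent bookkeeping choices rather than different ideas. (One small point: with $\phi(x)=b^{*-1}xb^{*}$ your hypothesis gives $\phi(J)\subseteq J$, and it is the injectivity half of the Matsumura isomorphism that yields the reverse inclusion via $b^{*}Jb^{*-1}\subseteq J$; make sure your conventions for $\phi$ versus $\phi^{-1}$ are consistent when you say $\phi(P_j)=P_{j-1}$.)
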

\begin{proof}
  Since Lemma \ref{lem:two-sided} applies to $J \subset B$, it shows that $J$ is a two-sided ideal in $B$.
  We denote the standard basis elements of the free $R$-module $B$ by
  \begin{equation} \label{eq:b_ij}
    b_{i, j} \in M_e( R), \qquad 1 \leq i, j \leq e.
  \end{equation}
  In other words, the matrix $b_{i, j}$ has a single nonzero entry in row $i$ and column $j$, which is $1$
  for $i \leq j$ and $u$ for $i > j$. Since $J$ is two-sided, we have $b_{i,i} J b_{j,j} \subseteq J$, and therefore
  \begin{equation*}
    b_{i, i} J b_{j, j} = J_{i, j} b_{i, j}
  \end{equation*}
  for some ideals $J_{i, j} \subseteq R$. As $b_{1, 1} + b_{2, 2} + \cdots + b_{e, e} = 1$ in $B$, we conclude that
  \begin{equation*}
    J=\begin{pmatrix*}
      J_{1, 1} & J_{1, 2} & \cdots & J_{1, e} \\
      u J_{2, 1} & J_{2, 2} & \ddots & \vdots\\
      \vdots & \ddots & \ddots & J_{e-1, e}\\
      u J_{e, 1} & \cdots & u J_{e, e-1} & J_{e, e}
    \end{pmatrix*} \subset M_e( R).
  \end{equation*}
  Using this description, the other assumption $Jb^* \subseteq b^*J$ directly implies
  \begin{equation*}
    J_{i, e} \subseteq J_{i+1, 1} \subseteq J_{i+2, 2} \subseteq \dots
      \subseteq J_{e, e-i} \subseteq J_{1, e-i+1} \subseteq J_{2, e-i+2} \subseteq \dots \subseteq J_{i, e}
  \end{equation*}
  for $i = 1, \dots, e$. Hence these inclusions are all equalities, and \eqref{eq:J} holds with $J_i := J_{i, e}$.
  Using \eqref{eq:J}, the assumption $J \supseteq b_{1, 2} J$ directly implies
  $J_1 \supseteq J_2 \supseteq \dots \supseteq J_e \supseteq u J_1$.
\end{proof}

\begin{prop} \label{prop:smooth-ram}
  Every proper left ideal $I \subset A$ of finite colength can be deformed to
  a proper left ideal $I' \subset A$ of finite colength such that $I'A^* \not\subseteq A^*I'$.
\end{prop}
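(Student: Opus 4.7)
The plan is to follow the structure of Lemma~\ref{lem:unram_I'}, replacing the commutative ingredients by their $B$-analogues. The Morita equivalence between $A = M_f(B)$ and $B$, realised by the idempotent $e_{11} \in A$, identifies left ideals $I \subset A$ of finite $R$-colength $l$ with left $B$-submodules $M := e_{11} I \subset e_{11} A \cong B^f$. Since simple $A$-modules have $R$-length $f$ while the simple $B$-modules $S_i = B/\m_i$ have $R$-length one, one has $l = fk$ where $k$ is the $R$-colength of $M$, matching Corollary~\ref{hilbsmo}. Under this dictionary, the ideals $I'$ of the form $M_f(J')$ with $J'$ of the restrictive shape~\eqref{eq:J} (the only $I'$ with $I'A^* \subseteq A^*I'$, by Lemmas~\ref{lem:two-sided} and~\ref{lem:J}) correspond precisely to the diagonal submodules $M' = (J')^{\oplus f} \subset B^f$.

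Next I would construct an explicit target $M' \subset B^f$ of $R$-colength $k$ with the same multiplicity vector $\underline{m} = (m_1, \ldots, m_e)$ as $M$, where $m_i$ counts the appearances of $S_i$ among the composition factors of $B^f/M$ and $\sum m_i = k$, chosen so as not to be of the diagonal form. This vector is a $K$-theoretic invariant, constant on connected components of $\Hilb_A(l)$ and hence necessarily preserved under any deformation. For $f > 1$, take $M' := J_1 \oplus J_2^{\oplus(f-1)}$ with left ideals $J_1 \neq J_2 \subset B$ whose partial multiplicities add up to $\underline{m}$; in the rigid edge cases where $\underline{m}$ forces $J_1 = J_2$ to be a fixed $\m_i$, an explicit calculation of maximal left ideals shows that the original $I$ itself is already non-diagonal for $f > 1$, so one takes $I' = I$. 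For $f = 1$, Lemma~\ref{lem:J} shows the forbidden locus is very restrictive (the diagonal entries of $J'$ must all be equal), so non-diagonal $M'$ of every multiplicity vector exist by direct construction; every two-sided maximal ideal $\m_i \subset B$ already violates~\eqref{eq:J}, which handles colength one, and for higher colength one perturbs a candidate by an off-diagonal generator such as $b_{i,i+1}$.

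The remaining step, which I expect to be the main obstacle, is the deformation itself: showing that $M$ and $M'$ lie in the same connected component of $\Hilb_A(l)$, equivalently in the same component of the punctual noncommutative Quot scheme of left $B$-submodules of $B^f$ of $R$-colength $k$ and multiplicity vector $\underline{m}$. This is the $B$-analogue of the irreducibility of $\Quot_R(R^f, l)$ (\cite[Proposition~6]{lehn2}) used in Lemma~\ref{lem:unram_I'}. My plan is a $\mathbb{G}_m$-degeneration argument: the scaling action on $u, v \in R$ lifts to $B$ and $B^f$ and acts on $\Hilb_A(l)$ with monomial fixed submodules generated by monomials in $u, v$ and the matrix units $b_{i,j}$; every submodule flows, as $t \to 0$, to such a monomial fixed point with the same $\underline{m}$, reducing connectedness to a combinatorial check among monomial submodules via explicit one-parameter families coming from the $\mathbb{Z}^2$-grading of $B$. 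Once this is established, the deformation from $M$ to $M'$ translates back into the required deformation from $I$ to $I'$.
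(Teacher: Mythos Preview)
Your approach diverges from the paper's at the crucial step. The paper does not try to prove any general connectedness of the noncommutative punctual Quot scheme; instead, starting from a bad $I = M_f(J)$ with $J$ of the shape \eqref{eq:J}, it writes down an explicit $\PP^1$-family connecting $J$ to a specific $J'$. The construction changes only the first row of $J$: two of the ideals $J_i$ are replaced by ideals $J_i'$ differing from them by one copy of $\C$ (one enlarged, one shrunk), chosen so that $J/(J\cap J')$ and $J'/(J\cap J')$ are both isomorphic to the simple module $S_1$. Then $(J+J')/(J\cap J') \cong S_1 \oplus S_1 \cong \C^2$ as a $B/\m_1$-module, and the $\PP^1$ of lines in this plane parameterizes left ideals interpolating between $J$ and $J'$. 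One checks by hand that $J'$ is a left ideal and is not of the shape \eqref{eq:J}, hence good.

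The gap in your proposal is the connectedness statement itself. Your $\mathbb{G}_m$-degeneration reduces it to a combinatorial claim about monomial $B$-submodules of $B^f$ with a prescribed multiplicity vector, but that claim is not obvious and you have not proved it; in the commutative case this is the substance of \cite[Proposition~6]{lehn2}, and the hereditary-order analogue is neither standard nor immediate. Your handling of the target $M'$ and of the edge cases is essentially fine (in particular no bad $I$ can have multiplicity vector $f\cdot e_i$, since the unique maximal left ideal with quotient $S_i$ is $\m_i$, which already fails \eqref{eq:J}), but all of that is moot until the connectedness is actually established. The paper's one-step $\PP^1$ move sidesteps the issue entirely and is much shorter.
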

\begin{proof}
  We may assume $IA^* \subseteq A^*I$, since otherwise there is nothing to prove.
  Using Lemma \ref{lem:two-sided} and Lemma \ref{lem:J},
  we get $I = M_f( J)$ with $J \subset B$ given by \eqref{eq:J} for some ideals
  \begin{equation*}
    R \supseteq J_1 \supseteq J_2 \supseteq \dots \supseteq J_e
  \end{equation*}
  of finite colength, not all equal to $R$, such that $J_e \supseteq u J_1$. It suffices to deform $J$ to a
  left ideal $J' \subset B$ such that $J'B^* \not\subseteq B^*J'$. Changing $J$ only in the first row, we will take
  \begin{equation} \label{eq:J'}
    J'=\begin{pmatrix*}
      J_e' & J_{e-1}' & \cdots & J_1' \\
      u J_1 & J_e & \cdots & J_2\\
      \vdots & \ddots & \ddots & \vdots\\
      u J_{e-1} & \cdots & u J_1 & J_e
    \end{pmatrix*} \subset M_e( R)
  \end{equation}
  for some ideals $J_1', \dots, J_e' \subseteq R$, chosen as follows.

  Suppose that $J_1 = \dots = J_e$. Since $\m J_e \neq J_e$ by Nakayama's lemma,
  the vector space $J_e/\m J_e$ over $R/\m = \C$ has a one-dimensional quotient.
  Hence we can find an ideal
  \begin{equation*}
    J_e' \subseteq J_e \quad\text{with}\quad J_e/J_e' \cong \C \quad\text{as $R$-modules.}
  \end{equation*}
  Since $J_1 \neq R$ by assumption, the $R$-module $R/J_1$ of finite length has a simple submodule, which is
  necessarily isomorphic to $R/\m = \C$. Hence we can find an ideal
  \begin{equation*}
    J_1' \supseteq J_1 \quad\text{with}\quad J_1'/J_1 \cong \C \quad\text{as $R$-modules.}
  \end{equation*}
  Finally, we take $J_i' = J_i$ for $i \neq e, 1$ in this case.

  Now suppose that $J_1 = \dots = J_e$ is not true. Choose an index $m$ with $J_m \neq J_{m+1}$.
  Then the $R$-module $J_m/J_{m+1}$ of finite length has a simple submodule and a simple quotient,
  which are both necessarily isomorphic to $R/\m = \C$. Hence we can find two ideals
  \begin{equation*}
    J_m \supseteq J_m', J_{m+1}' \supseteq J_{m+1} \quad\text{with}\quad
      J_m/J_m' \cong \C \cong J_{m+1}'/J_{m+1} \quad\text{as $R$-modules.}
  \end{equation*}
  Finally, we take $J_i' = J_i$ for $i \neq m, m+1$ in this case.

  To show that the $R$-submodule $J' \subseteq B$ defined by \eqref{eq:J'} is a left ideal, we check that
  the basis elements $b_{i, j} \in B$ in \eqref{eq:b_ij} satisfy $b_{i, j} J' \subseteq J'$.
  This clearly holds for $i = j = 1$, and also for $i, j \geq 2$ because $J$ is a left ideal.
  In each of the two cases considered above, the ideals $J_1', \dots, J_e' \subseteq R$ satisfy by construction
  \begin{equation*}
    J_i \subseteq J_{i-1}' \quad\text{for}\quad i \geq 2, \quad\text{and}\quad u J_1 \subseteq J_e'.
  \end{equation*}
  This directly implies $b_{1, 2} J' \subseteq J'$. Similarly, $J_1', \dots, J_e'$ also satisfy by construction
  \begin{equation*}
    J_i' \subseteq J_{i-1} \quad\text{for}\quad i \geq 2, \quad\text{and}\quad u J_1' \subseteq J_e.
  \end{equation*}
  This directly implies $b_{e, 1} J' \subseteq J'$.
  Using $b_{1, i} = b_{1, 2} b_{2, i}$ and $b_{i, 1} = b_{i, e} b_{e, 1}$ for $i \geq 2$, we conclude that
  that $J' \subseteq B$ is indeed a left ideal.

  Since $J'$ is by construction not of the form \eqref{eq:J},
  Lemma \ref{lem:J} shows that $J'B^* \not\subseteq B^*J'$.
  It remains to prove that $J$ can be deformed to $J'$.

  The left $B$-modules $J/(J \cap J')$ and $J'/(J \cap J')$ are, by construction of $J'$, both isomorphic to
  the simple module $S_1 = B/\m_1$. Consequently, the sum $J + J' \subseteq B$ satisfies
  \begin{equation*}
    \dfrac{J + J'}{J \cap J'} \cong \dfrac{J}{J \cap J'} \oplus \dfrac{J'}{J \cap J'}
      \cong S_1 \oplus S_1 \cong \C^2
  \end{equation*}
  where all these $B$-modules are $\C$-vector spaces because $B$ acts on them via
  $B \twoheadrightarrow B/\m_1 \cong \C$.
  We consider the $\PP^1$ of lines in this $\C^2$. The universal quotient
  \begin{equation*}
    \C^2 \otimes_{\C} \OO_{\PP^1} \twoheadrightarrow \OO_{\PP^1}( 1)
  \end{equation*}
  over this $\PP^1$ gives rise to a family of $B$-module quotients 
  \begin{equation*}
    (J+J') \otimes_{\C} \OO_{\PP^1} \twoheadrightarrow \OO_{\PP^1}( 1).
  \end{equation*}
  Its kernel $\J \subset B \otimes_{\C} \OO_{\PP^1}$ restricts
  to $J$ over $[1:0] \in \PP^1$, and to $J'$ over $[0:1] \in \PP^1$.
  Therefore, $\J$ is the required deformation of $J$ to $J'$.
\end{proof}

\subsection{Singular ramification with equal ramification indices: $e = e' > 1$}
In this case, our algebra $A = A_{e, e, f}$ over $R = \C[[u,v]]$ is ramified over $u = 0$ and over $v = 0$,
with common ramification index $e$. Explicitly, we have $A = M_f( S)$ for
\begin{equation*}
  S = R \langle x,y \rangle \text{ with the relations } x^e=u, y^e=v \text{ and } yx=\zeta xy
\end{equation*}
where $\zeta$ is a primitive $e$-th root of unity. The ring $S$ is local in the sense that
it has a unique two-sided maximal ideal $\n \subset S$, which is generated by $x$ and $y$.

In this situation, the analogue of Lemma \ref{lem:unram_I'} is no longer true; a counterexample is given by
$f = 1$ and $I = \n$. However, the following fact will suffice for our purposes.

\begin{lem} \label{lem:sing-ram}
  $\Hilb_A(l)$ is connected if $f$ divides $l$, and it is empty otherwise.
\end{lem}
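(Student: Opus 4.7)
My plan is to split the proof into the emptiness claim (if $f \nmid l$) and the connectedness claim (if $f \mid l$), reducing both to statements about the ring $S$ via the Morita equivalence $A\lmod \simeq S\lmod$ coming from the idempotent $e_1 := E_{1,1} \in A = M_f(S)$. For emptiness, observe that $S$ is local with residue field $\C = S/\n$, so the unique simple left $S$-module is $\C$; under Morita it corresponds to the simple left $A$-module $(S/\n)^f$, which has $R$-length $f$. Every finite-length $A$-module is an iterated extension of this simple and so has $R$-length in $f\N$. Thus $\Hilb_A(l) = \emptyset$ whenever $f \nmid l$.

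For connectedness, write $l = m f$. Applying Morita in families (flatness is preserved), a left ideal $I \subset A$ with $\ell_R(A/I) = l$ corresponds to a left $S$-submodule $N := e_1 I \subset e_1 A \cong S^f$ with $\ell_R(S^f/N) = m$, identifying $\Hilb_A(l)$ with the projective $\C$-scheme $\Quot_S(S^f, m)$. I would then introduce the flag Quot scheme $F_m$ parametrizing chains
\[
S^f = N_0 \supset N_1 \supset \cdots \supset N_m
\]
with each $N_{i-1}/N_i \cong \C$, built inductively as $F_m := \PP(\mathcal N_{m-1}/\n \mathcal N_{m-1}) \to F_{m-1}$ over the universal last sheaf $\mathcal N_{m-1}$. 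The fiber at a point corresponding to $N_{m-1}$ is $\PP(N_{m-1}/\n N_{m-1})$, a non-empty projective space: indeed $N_{m-1} \neq 0$ (otherwise $S^f/N_{m-1}$ would have infinite $R$-length) forces $N_{m-1}/\n N_{m-1} \neq 0$ by Nakayama. Each fiber being connected, induction from $F_0 = \mathrm{pt}$ shows that $F_m$ is connected. The forgetful map $F_m \to \Quot_S(S^f, m)$ sending a chain to $N_m$ is surjective because $S^f/N$ has $S$-length equal to its $R$-length $m$ (the unique simple $S$-module has $R$-length one) and therefore admits a composition series of length $m$ which lifts to a chain. Hence $\Quot_S(S^f, m)$ is connected, as the continuous image of a connected scheme.

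The main subtlety I anticipate is that the coherent sheaf $\mathcal N_{m-1}/\n \mathcal N_{m-1}$ need not be locally free: its rank, which counts minimal generators of $N_{m-1}$ fiberwise, is only upper-semicontinuous on $F_{m-1}$. Consequently $F_m \to F_{m-1}$ is merely a relative projectivization of a coherent sheaf rather than a projective bundle. This does not obstruct the argument, however, since the induction only needs the fibers to be non-empty and connected, and both conditions continue to hold unconditionally.
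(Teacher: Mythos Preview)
Your proposal is correct and follows essentially the same approach as the paper: the paper observes that $A$ has a unique simple left module of $R$-length $f$ (giving emptiness for $f\nmid l$) and then defers to the flag-scheme induction from \cite[Theorem 3.6.(iii)]{hst}, replacing the Quot- and Flag-schemes there by their punctual versions. Your write-up is a fully spelled-out version of that same argument, with the harmless extra step of first passing from $A$ to $S$ via Morita so that the unique simple has $R$-length one; the non--locally-free subtlety you flag is exactly the point absorbed by the paper's reference.
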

\begin{proof}
  The unique simple $S$-module $S/\n \cong \C$ has $R$-length one.
  Therefore, $S/\n$ is Morita equivalent to a unique simple left $A$-module, whose $R$-length is $f$. 

  Now one can just copy the corresponding part in the proof of \cite[Theorem 3.6. iii)]{hst}
  and replace the Quot- and the Flag-scheme by the punctual versions.
  The main point is that induction also works in this case, because $A$ has just one simple left module. 
\end{proof}

\section{Moduli spaces of rank one sheaves}\label{sec3}
Let $\A$ be a terminal order on a smooth projective surface $X$ over $\C$.
\begin{defi}[\normalfont{\cite[Definition 4]{chan5}}] \label{def:omega_A}
  The \emph{canonical bimodule} of $\A$ is
  \begin{equation*}
    \omega_{\A} := \HHom_{\OO_X}(\A,\omega_X).
  \end{equation*}
\end{defi}

\begin{lem}[\normalfont{\cite[Theorem 1.58]{reede}}]\label{serredu}
  Let $E$ and $F$ be two $\OO_X$-coherent left $\A$-modules. Then there is the following form of Serre duality:
  \begin{equation*}
    \Ext^i_\A(E,F) \cong \Ext^{2-i}_\A(F,\omega_\A\otimes_\A E)^{\vee}
  \end{equation*}
  for $i\in\{0,1,2\}$. Here $(\_)^{\vee}$ denotes the $\C$-dual.
\end{lem}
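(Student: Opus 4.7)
The plan is to derive the duality from classical Serre duality on $X$, exploiting that $\A$ is locally free as an $\OO_X$-module (a basic property of maximal orders).

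Since the forgetful functor $\A\lmod \to \OO_X\lmod$ is exact with an exact right adjoint $G \mapsto \HHom_{\OO_X}(\A, G)$ that preserves injectives, derived adjunction yields a natural isomorphism
\[
  \Ext^i_\A(M, \HHom_{\OO_X}(\A, G)) \cong \Ext^i_{\OO_X}(M, G)
\]
for any coherent $\A$-module $M$ and coherent $\OO_X$-module $G$. Taking $G = \omega_X$, so that $\HHom_{\OO_X}(\A, \omega_X) = \omega_\A$, and combining with classical Serre duality on the smooth projective surface $X$ gives
\[
  \Ext^i_\A(M, \omega_\A) \cong \Ext^i_{\OO_X}(M, \omega_X) \cong \HH^{2-i}(X, M)^{\vee}.
\]

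This already settles the case $E = \A$ of the lemma, since then $\omega_\A \otimes_\A E = \omega_\A$ and $\Ext^i_\A(\A, F) = \HH^i(X, F)$. For general $E$, I would build a natural Yoneda pairing
\[
  \Ext^i_\A(E, F) \otimes_\C \Ext^{2-i}_\A(F, \omega_\A \otimes_\A E) \to \Ext^2_\A(E, \omega_\A \otimes_\A E) \xrightarrow{\tr} \HH^2(X, \omega_X) \xrightarrow{\sim} \C,
\]
where the trace map $\tr$ is modelled on the evaluation $\omega_\A \to \omega_X$ at $1 \in \A$ composed with Serre's trace on $X$. To verify that the pairing is perfect, I would use the theorem in Section \ref{sec1} giving local global dimension $2$ for $\A$, so that $E$ admits a bounded locally free $\A$-resolution $P^{\bullet} \to E$ Zariski-locally. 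The local-to-global Ext spectral sequence
\[
  \HH^p(X, \mathcal{E}xt^q_\A(E, F)) \Rightarrow \Ext^{p+q}_\A(E, F)
\]
and its analogue for the right-hand side then reduce perfectness, via a five-lemma argument, to the sheaf-level version of the duality for the locally $\A$-free case, which follows from the opening display combined with Serre duality on $X$ applied to the $\mathcal{E}xt$-sheaves.

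The main obstacle is the construction and functoriality of the trace map $\tr$. For $E = \A$ it is immediate, but propagating it through the resolution $P^{\bullet}$ to a general $E$ in a way that is natural in both $E$ and $F$ requires either a careful diagram chase combining the two spectral sequences, or equivalently a derived-categorical identification of $\omega_\A \otimes^{L}_\A (-)[2]$ as a Serre functor on $D^b(\A\lmod)$.
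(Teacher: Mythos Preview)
The paper does not prove this lemma; it is quoted from \cite[Theorem~1.58]{reede} and stated without proof here, so there is no argument to compare against directly.

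Your outline is the standard route to Serre duality for orders and is essentially correct. The adjunction step giving $\Ext^i_\A(M,\omega_\A)\cong\Ext^i_{\OO_X}(M,\omega_X)$ is exactly right, and your identification of the remaining issue (a natural trace map and reduction to the locally free case) is accurate. One simplification you may have missed: for a terminal order, $\omega_\A$ is an \emph{invertible} $\A$-bimodule (the local computation in Section~\ref{sec2} shows $A^* = b^*A = Ab^*$ with $b^*$ a unit in $M_e(K)$), so $\omega_\A\otimes_\A(-)$ is exact and agrees with the derived tensor product. This lets you bypass the spectral-sequence/five-lemma reduction: the functor $E\mapsto\omega_\A\otimes_\A E[2]$ is then manifestly an autoequivalence of $D^b(\A\lmod)$, and checking it is a Serre functor reduces immediately to the case $E=\A$ you have already handled, via a resolution of $E$ by finite sums of shifts of $\A$ (which exists since $\A$ has finite global dimension). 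That is the cleanest way to close the gap you flagged.
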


\begin{lem}[\normalfont{\cite[Lemma 1.62]{reede}}]\label{forgetin}
  Let $E$ and $T$ be $\OO_X$-coherent left $\A$-modules such that $E$ is locally projective
  and $T$ is an Artinian module of finite length. Then the map
  \begin{equation*}
    \Ext_\A^2(T,E) \to \Ext_{\OO_X}^2(T,E)
  \end{equation*}
  induced by the forgetful functor $\A \lmod \to \OO_X \lmod$ is injective.
\end{lem}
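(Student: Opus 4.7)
The plan is to reduce via the local-to-global spectral sequence to a statement on completions of stalks and then to dualise it using local Serre duality. Because $T$ is Artinian, the sheaves $\mathcal{E}xt^i_\A(T,E)$ and $\mathcal{E}xt^i_{\OO_X}(T,E)$ are supported on the finite set $\supp(T)$, so their higher cohomology vanishes and the two local-to-global spectral sequences degenerate, giving
\[
  \Ext^i_\A(T,E)=\bigoplus_{p\in\supp T}\mathcal{E}xt^i_\A(T,E)_p,\qquad
  \Ext^i_{\OO_X}(T,E)=\bigoplus_{p\in\supp T}\mathcal{E}xt^i_{\OO_X}(T,E)_p,
\]
and the forgetful map respects this decomposition. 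Hence injectivity can be checked stalkwise. Passing to the $\m_p$-adic completion at a point $p\in\supp T$ and invoking Definition~\ref{locstr}, the problem reduces to the following local statement: for $R=\C[[u,v]]$, $A=A_{e,e',f}$, $T$ a finite length $A$-module, and $E$ a projective $A$-module, the natural map $\Ext^2_A(T,E)\to\Ext^2_R(T,E)$ is injective.

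I would then dualise both sides using local Serre duality. Classical local duality on the regular local ring $R$ of Krull dimension two gives $\Ext^2_R(T,E)^\vee\cong\Hom_R(E,T)$. On the $A$-side, the explicit structure of the orders $A_{e,e',f}$ from Section~\ref{sec2} (visible already from the identities $B^*=Bb^*=b^*B$ involving the element $b^*$ of~\eqref{eq:b^*} in the smooth-ramification case) shows that $A^*=\Hom_R(A,R)$ is isomorphic to $A$ both as a left and as a right $A$-module, so $A$ is a Gorenstein $R$-order with dualising bimodule $A^*$. The local version of Lemma~\ref{serredu} then yields $\Ext^2_A(T,E)^\vee\cong\Hom_A(E,A^*\otimes_A T)$, which the right-module isomorphism $A^*\cong A$ simplifies to $\Hom_A(E,T)$. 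Under these identifications the dualised forgetful map is a natural $\C$-linear map $\Hom_R(E,T)\to\Hom_A(E,T)$, and injectivity of the original forgetful map becomes equivalent to surjectivity of this map.

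The main obstacle is to produce this surjection, that is, to exhibit for every $A$-linear $\phi:E\to T$ an $R$-linear preimage. In the Azumaya case $e=e'=1$ the surjection is the classical averaging map built from the separability idempotent of $A=M_f(R)$, and restricts to the identity on $\Hom_A(E,T)\subset\Hom_R(E,T)$. In the ramified cases $A$ is no longer separable over $R$, and I would proceed case by case using the explicit matrix descriptions of $A=M_f(B)$ and the concrete description of the simple $A$-modules given in Section~\ref{sec2}: combining multiplication by the primitive idempotents of $B$ with suitable $R$-linear liftings of maps between the simple modules yields an explicit preimage in $\Hom_R(E,T)$ of any $A$-linear $\phi:E\to T$. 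This case-by-case construction mirrors the three-case structure of Section~\ref{sec2} and is the technical core of the proof.
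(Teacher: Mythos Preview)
The paper does not prove this lemma; it merely cites \cite[Lemma 1.62]{reede}. So there is no in-paper argument to compare against, and I can only comment on the soundness of your proposal.

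Your reduction to completions and your plan to dualise via local Serre duality are the right moves. The gap is in the step where you ``simplify'' $\Hom_A(E,A^*\otimes_A T)$ to $\Hom_A(E,T)$ using a right-$A$-module isomorphism $A^*\cong A$. A right-module isomorphism $A\to A^*$, $a\mapsto ab^*$, identifies $A^*\otimes_A T$ with the \emph{twist} ${}^{\sigma}T$ of $T$ by the inner automorphism $\sigma=b^*(\,\cdot\,)(b^*)^{-1}$, not with $T$ itself. In the ramified cases $b^*$ is not central; indeed the paper records just above Corollary~\ref{hilbsmo} that $B^*\otimes_B S_i\cong S_{i-1}$, so $A^*\otimes_A(-)$ genuinely permutes the simples. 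Hence $A^*\otimes_A T$ and $T$ need not be isomorphic as left $A$-modules, your identification is not valid, and the ``surjectivity of $\Hom_R(E,T)\to\Hom_A(E,T)$'' you then try to manufacture by averaging is the wrong target. The case-by-case construction you sketch in the final paragraph is therefore aimed at a statement that is neither the dualised forgetful map nor, in general, true.

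There is a clean repair that also eliminates the case analysis. Keep the target $\Hom_A(E,A^*\otimes_A T)$ and use the forgetful/coinduction adjunction directly:
\[
  \Hom_R(E,T)\;\cong\;\Hom_A\bigl(E,\Hom_R(A,T)\bigr)\;\cong\;\Hom_A\bigl(E,\,A^*\otimes_R T\bigr),
\]
the last isomorphism because $A$ is finite free over $R$. The dual of the forgetful map is then induced by the canonical surjection $A^*\otimes_R T\twoheadrightarrow A^*\otimes_A T$, and since $E$ is projective over $A$ the functor $\Hom_A(E,-)$ preserves this surjection. No separability idempotent and no inspection of the three ramification types is needed; the only point to check carefully is that Serre duality intertwines the forgetful map with this adjunction map, which is a routine naturality verification.
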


\begin{defi}
  A left $\A$-module $E$ is called a \emph{torsion-free $\A$-module of rank one} if
  \begin{itemize}
   \item $E$ is coherent and torsion-free as an $\OO_X$-module, and
   \item the stalk $E_{\eta}$ at the generic point $\eta \in X$ has dimension $1$ over the division ring $\A_{\eta}$.
  \end{itemize}
\end{defi}

\begin{lem}[\normalfont{\cite[Proposition 4.2.]{chan6}}]\label{projfree}
  Let $E$ be a torsion-free $\A$-module of rank one which is a locally free $\OO_X$-module,
  then for every closed point $p\in X$ there is an isomorphism of completions 
  \begin{equation*}
    \Ehat_p\cong\Ahat_p.
  \end{equation*}
  Thus $E$ is locally free over $\A$ if and only if $E$ is locally free over $\OO_X$.
\end{lem}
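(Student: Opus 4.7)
Since the claim concerns completions, I localize at each closed point $p \in X$ and pass to the complete local setting. Using Definition~\ref{locstr}, I identify $\Ohat_{X,p}$ with $R := \C[[u,v]]$ and $\Ahat_p$ with $A := A_{e,e',f} = M_f(B)$, and set $M := \Ehat_p$. The hypotheses translate into: $M$ is a finitely generated left $A$-module that is free as an $R$-module of rank $\rk_R(A) = (ef)^2$ (because $E$ is $\OO_X$-locally free), and $M \otimes_R K \cong A \otimes_R K$ as left $(A \otimes_R K)$-modules, where $K := \Quot(R)$ (because $E$ has $\A$-rank one). The goal is to exhibit an isomorphism $M \cong A$ of left $A$-modules.

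My plan is a Nakayama argument. For any $x \in M$, the map $\varphi_x : A \to M$, $a \mapsto ax$, is $A$-linear; if its reduction modulo $\m := (u,v)R$ is surjective, then $\varphi_x$ is surjective by Nakayama's lemma, and since both source and target are $R$-free of the same rank $(ef)^2$, $\varphi_x$ is automatically an isomorphism. Thus the task reduces to producing $x \in M$ whose image generates $M/\m M$ as a left $A/\m A$-module. Since $M/\m M$ and $A/\m A$ have the same $\C$-dimension $(ef)^2$, it suffices to establish an $A/\m A$-module isomorphism $M/\m M \cong A/\m A$ and then lift the preimage of $1$.

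The main obstacle is this residue-module identification. In the unramified case $e = e' = 1$, $A/\m A = M_f(\C)$ is a simple algebra and the isomorphism is forced by $\C$-dimension. In the ramified cases, however, $A/\m A$ is non-semisimple with several isomorphism classes of indecomposable projectives, and dimension counting alone is insufficient. I would proceed case by case, using the explicit matrix description \eqref{eq:B} of $B$ in the smoothly ramified case $e > e' = 1$, and the structure of the skew polynomial ring $S$ when $e' > 1$, combined with the \emph{global} hypothesis that $\A_\eta$ is a central division algebra over $\C(X)$ and $E_\eta \cong \A_\eta$ as $\A_\eta$-modules. Bringing in this global input is essential: purely local data allow $R$-free $A$-lattices in $A \otimes_R K$ of the correct generic rank that are not isomorphic to $A$, and the global division-algebra constraint on $\A$ is what rules them out.
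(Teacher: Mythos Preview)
The paper does not prove this lemma; it is cited from \cite[Proposition~4.2]{chan6} without argument, so there is no in-paper proof to compare against. On its own terms: your reduction to $R=\C[[u,v]]$ and the Nakayama strategy are sound, and the unramified case $e=e'=1$ is handled correctly. The same dimension argument in fact also settles the case $e=e'>1$, since then $A=M_f(S)$ with $S$ local, so $A$ modulo its radical is $M_f(\C)$, which is simple, and $M/\m M\cong A/\m A$ is forced by its $\C$-dimension. Your diagnosis of the smoothly ramified case $e>e'=1$ is also correct: with $f=1$ and $A=B$ as in \eqref{eq:B}, the columns $P_1,\dots,P_e$ of $B$ are pairwise non-isomorphic indecomposable projective left $B$-modules, each $R$-free of rank $e$, and for instance $P_1^{\oplus 2}\oplus P_3\oplus\cdots\oplus P_e$ is an $R$-free left $B$-module of rank $e^2$ with the correct generic fibre but not isomorphic to $B$.

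The gap is that you stop at this diagnosis. Saying you ``would proceed case by case \dots\ combined with the global hypothesis that $\A_\eta$ is a central division algebra'' is not an argument: you neither name which global datum you intend to use nor explain how it constrains the single completion $\Ehat_p$. After base change to $K=\Quot(R)$, the hypothesis $E_\eta\cong\A_\eta$ becomes precisely the condition $M\otimes_R K\cong A\otimes_R K$ that you have already imposed and that you yourself observe is insufficient. Whatever additional input the cited reference supplies, your proposal does not provide it; the argument is incomplete exactly in the case that carries the content of the lemma.
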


\begin{lem}[\normalfont{\cite[Theorem 1.84]{reede}}]\label{ceinsd}
  If $E$ is a torsion-free $\A$-module of rank one, then
  \begin{equation*}
    c_1(\A^* \otimes_{\A} E)=c_1(E)-2c_1(\A)
  \end{equation*}
  where $\A^* := \HHom_{\OO_X}( \A, \OO_X)$ denotes the dual sheaf of $\A$.
\end{lem}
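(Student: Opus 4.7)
The approach is to first reduce to the case where $E$ is locally $\A$-free of rank one, and then compare the determinants of the transition cocycles of $E$ and $\A^{*}\otimes_{\A}E$ as $\OO_X$-modules.

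For the reduction, I consider the $\OO_X$-double dual $E^{**}:=\HHom_{\OO_X}(\HHom_{\OO_X}(E,\OO_X),\OO_X)$. Because $X$ is a smooth surface and $E$ is torsion-free, $E^{**}$ is locally free as an $\OO_X$-module, and hence by Lemma \ref{projfree} also locally free as an $\A$-module, with the canonical inclusion $E\hookrightarrow E^{**}$ having zero-dimensional cokernel. Since $\A^*$ is invertible as an $\A$-bimodule (a standard property of maximal orders, verifiable case by case using the local structure of Definition \ref{locstr}), it is locally projective on both sides, so $\A^{*}\otimes_{\A}(-)$ is exact and preserves zero-dimensional support. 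Consequently $c_1(E)=c_1(E^{**})$ and $c_1(\A^{*}\otimes_{\A}E)=c_1(\A^{*}\otimes_{\A}E^{**})$, reducing to the case where $E$ is locally $\A$-free.

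Under this assumption, choose an open cover $\{U_i\}$ with left-$\A$-linear trivializations $\phi_i\colon\A|_{U_i}\xrightarrow{\sim}E|_{U_i}$; since every left-$\A$-linear automorphism of $\A$ is right multiplication by a unit, the cocycle $\phi_j^{-1}\phi_i=R_{h_{ij}}$ is determined by units $h_{ij}\in\A(U_{ij})^{\times}$. A direct unwinding of the tensor product shows that the induced trivializations $\A^{*}|_{U_i}\xrightarrow{\sim}(\A^{*}\otimes_{\A}E)|_{U_i}$ have transition cocycle given by right multiplication by $h_{ij}$ on the right $\A$-module $\A^{*}$, which coincides $\OO_X$-linearly with the dual of left multiplication by $h_{ij}$ on $\A$. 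Hence the line bundles
\[
L:=\det\nolimits_{\OO_X}(E)\otimes\det\nolimits_{\OO_X}(\A)^{-1}\quad\text{and}\quad L':=\det\nolimits_{\OO_X}(\A^{*}\otimes_{\A}E)\otimes\det\nolimits_{\OO_X}(\A^{*})^{-1}
\]
have cocycles $\{\det R_{h_{ij}}\}$ and $\{\det L_{h_{ij}}\}$ respectively. The classical identity $\det R_h=\det L_h$ for any $h\in\A$, which holds because both sides equal a power of the reduced norm on the central simple generic fiber $\A_\eta$, yields $L\cong L'$. Combined with $c_1(\A^{*})=-c_1(\A)$, this rearranges to $c_1(\A^{*}\otimes_{\A}E)=c_1(E)-2c_1(\A)$.

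The main obstacle is the bookkeeping in the transition computation: it requires carefully unwinding the right-$\A$-module structure on $\A^{*}$ to identify the induced cocycle, and verifying the local projectivity of $\A^{*}$ used in the reduction step. Once this is done, the determinant identity on a central simple algebra makes the conclusion immediate.
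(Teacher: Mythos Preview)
The paper does not actually contain a proof of this lemma; it is stated with a bare citation to \cite[Theorem 1.84]{reede} and used as a black box. So there is nothing in the paper to compare your argument against.

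That said, your argument is sound. The reduction to the locally $\A$-free case via the $\OO_X$-double dual is correct: the cokernel of $E\hookrightarrow E^{**}$ has zero-dimensional support, and $\A^{*}$ is locally free as a right $\A$-module (this follows from the explicit local description in Section~\ref{sec2}, where one sees $B^{*}=b^{*}B=Bb^{*}$ and analogously for $S$), so tensoring preserves the exact sequence and the zero-dimensional support of the cokernel. In the locally free case, your identification of the transition cocycle on $\A^{*}\otimes_{\A}E$ as the $\OO_X$-dual of $L_{h_{ij}}$ is correct, since the right $\A$-action on $\A^{*}$ is precomposition with left multiplication. The key identity $\det R_{h}=\det L_{h}$ is justified exactly as you say: both are regular functions on $U_{ij}$ which agree at the generic point (where each equals the appropriate power of the reduced norm in the central simple algebra $\A_{\eta}$), and $X$ is integral. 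Combining this with $c_1(\A^{*})=-c_1(\A)$ gives the claimed formula.
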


\begin{defi}
  A \emph{family of torsion-free $\A$-modules of rank one} over a $\C$-scheme $T$ is a left module $\E$ under the
  pullback $\A_T$ of $\A$ to $X \times T$ with the following properties:
  \begin{itemize}
   \item $\E$ is coherent over $\OO_{X\times T}$ and flat over $T$;
   \item for every $t \in T$, the fiber $\E_t$ is a torsion-free $\A_{\C(t)}$-module of rank one.
  \end{itemize}
  Here $\C(t)$ is the residue field of $T$ at $t$, and the fiber is the pullback of $\E$ to $X \times \Spec \C(t)$. 
\end{defi}
Now one can define the moduli functor
\begin{equation*}
  \M_{\A/X:P}: \Schemes_{\C} \to \Sets
\end{equation*}
which sends a $\C$-scheme $T$ to the set of isomorphism classes of families $\E$
of torsion-free $\A$-modules of rank one over $T$ with Hilbert polynomial $P$. 
\begin{thm}[\normalfont{\cite[Theorem 2.4]{hst}}]
  There is a coarse moduli scheme $M_{\A/X;P}$ for the functor $\M_{\A/X;P}$.
  The scheme $M_{\A/X;P}$ is of finite type and projective over $\C$.
\end{thm}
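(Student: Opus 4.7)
My plan is to construct $M_{\A/X;P}$ as a good quotient of a locally closed subscheme of a Quot scheme of $\A$-module quotients, following the standard GIT approach to moduli of semistable sheaves (after Simpson and Huybrechts--Lehn) adapted to the noncommutative sheaf of algebras $\A$.

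The first step is a boundedness statement: the family of torsion-free $\A$-modules $E$ of rank one with fixed Hilbert polynomial $P$ is bounded. Combining this with Serre vanishing produces an integer $n \gg 0$ such that for every such $E$ the twist $E(n)$ is globally generated, has vanishing higher cohomology, and has $N := P(n)$ global sections. Fixing a $\C$-vector space $V$ of dimension $N$, every such $E$ is then the target of a surjective $\A$-module homomorphism $V \otimes_{\C} \A(-n) \twoheadrightarrow E$.

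Let $Q$ denote the scheme parameterizing $\A$-module quotients of $V \otimes_{\C} \A(-n)$ with Hilbert polynomial $P$. This is a closed subscheme of Grothendieck's $\OO_X$-module Quot scheme---the $\A$-equivariance of the quotient cuts out a closed condition---and hence is projective over $\C$. Inside $Q$, let $R$ be the open locus consisting of those quotients for which $E$ is a torsion-free $\A$-module of rank one and the $N$ given generators induce an isomorphism $V \cong H^0(E(n))$. Two points of $R$ yield isomorphic $\A$-modules if and only if they lie in the same orbit of the natural $GL(V)$-action, and the scalars act trivially on the moduli problem.

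Set $M_{\A/X;P} := R /\!/ PGL(V)$; finite-typeness then follows from the GIT construction. The main obstacle is projectivity, which I would establish through the valuative criterion of properness: given a flat family of torsion-free rank one $\A$-modules over the punctured spectrum of a discrete valuation ring, one extends it across the closed point by taking the saturation of a limiting coherent $\A$-module inside its reflexive hull, appealing to Lemma \ref{projfree} to ensure that reflexivity combined with local freeness over $\OO_X$ yields local freeness over $\A$. Carrying out this noncommutative saturation argument---in particular, checking that the extended family lifts to a point of $R$ after a suitable $GL(V)$-change of basis---is the central technical difficulty, and it is precisely here that the noncommutativity of $\A$ requires attention beyond the classical case.
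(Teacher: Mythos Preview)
The paper does not prove this theorem at all: it is stated with the attribution \cite[Theorem~2.4]{hst} and no argument is given. So there is no ``paper's own proof'' to compare your attempt against; the authors simply import the result from the literature.

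Your sketch follows the standard Simpson/Huybrechts--Lehn GIT construction, which is indeed how \cite{hst} proceeds, so in spirit you are recovering the cited proof. One point worth flagging: the reason the moduli space is \emph{projective} rather than merely quasi-projective is that torsion-free $\A$-modules of rank one are automatically Gieseker-stable as $\OO_X$-sheaves (the generic stalk is a one-dimensional vector space over the division ring $\A_\eta$, so there are no destabilising $\A$-submodules). You do not invoke this, and instead propose to obtain projectivity via a Langton-type saturation argument and the valuative criterion. That route can be made to work, but as written it is a genuine gap: you need to know that the limit you produce is again torsion-free of rank one and corresponds to a point of $R$, and this is exactly where automatic stability (or an explicit Langton argument tailored to $\A$-modules) is required. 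Without that ingredient your quotient $R/\!\!/PGL(V)$ is a priori only quasi-projective.
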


Instead of fixing the Hilbert polynomial, one can also fix the Chern classes of these modules.
We will work with the moduli space $M_{\A/X;c_1,c_2}$ of torsion-free $\A$-modules of rank one over $X$
with Chern classes $c_1 \in \NS(X)$ and $c_2\in \Z$.

\begin{lem} \label{extvan}
  Let $\A$ be a terminal del Pezzo order on $\PP^2$ over $\C$.
  If $E$ and $F$ are torsion-free $\A$-modules of rank one with $c_1(E)=c_1(F)$, then $\Ext^2_\A(E,F)=0$.
\end{lem}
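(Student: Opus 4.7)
The plan is to apply Serre duality to convert the vanishing of $\Ext^2_\A(E,F)$ into a vanishing of $\Hom$, and then to derive a contradiction from the ampleness of $-K_\A$ via a first Chern class computation.

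First, by Lemma \ref{serredu},
\begin{equation*}
  \Ext^2_\A(E,F) \cong \Hom_\A(F,\, \omega_\A \otimes_\A E)^{\vee},
\end{equation*}
so it suffices to show that this $\Hom$ space vanishes. Since $\omega_X$ is an $\OO_X$-line bundle, the canonical bimodule factors as $\omega_\A \cong \A^* \otimes_{\OO_X} \omega_X$, hence
\begin{equation*}
  \omega_\A \otimes_\A E \cong (\A^* \otimes_\A E) \otimes_{\OO_X} \omega_X.
\end{equation*}
Combining Lemma \ref{ceinsd} with Lemma \ref{lem:K_A}, and noting that $\A^* \otimes_\A E$ has $\OO_X$-rank $\rk(\A)$, I compute
\begin{equation*}
  c_1(\omega_\A \otimes_\A E) = c_1(E) - 2 c_1(\A) + \rk(\A)\, K_X = c_1(E) + \rk(\A)\, K_\A.
\end{equation*}

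Next, I would assume for contradiction the existence of a nonzero $\A$-linear map $\phi \colon F \to \omega_\A \otimes_\A E$. Both source and target are torsion-free $\A$-modules of rank one, so their generic stalks are one-dimensional over the division ring $\A_\eta$, and every nonzero $\A_\eta$-linear map between such modules is an isomorphism. After verifying that $\omega_\A \otimes_\A E$ is torsion-free as an $\OO_X$-module, so that $\phi \neq 0$ forces $\phi_\eta \neq 0$, it follows that $\phi_\eta$ is an isomorphism and, since $F$ is torsion-free, that $\phi$ itself is injective. Writing $Q$ for the cokernel and using $c_1(E) = c_1(F)$, this yields
\begin{equation*}
  c_1(Q) = c_1(\omega_\A \otimes_\A E) - c_1(F) = \rk(\A)\, K_\A.
\end{equation*}

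The final step is to exploit the del Pezzo hypothesis. On $\PP^2$ we have $\NS(\PP^2) = \Z \cdot H$, and $-K_\A$ ample means $K_\A$ is a strictly negative rational multiple of $H$, so $\rk(\A)\, K_\A$ is a strictly negative integer multiple of $H$. On the other hand, $Q$ is a coherent $\OO_{\PP^2}$-module of generic rank zero, so $c_1(Q)$ is either zero (when $Q$ is supported in codimension at least two, including the case $Q = 0$) or the class of a nonzero effective divisor; neither possibility matches a negative multiple of $H$, which gives the desired contradiction. The main obstacle I expect is the auxiliary torsion-freeness of $\omega_\A \otimes_\A E$; this should be checkable locally at each closed point of $\PP^2$ by means of the explicit models $A_{e,e',f}$ of Section \ref{sec2}, with Morita equivalence handling the Azumaya locus and the description of $\omega_\A$ via the matrix $b^*$ in \eqref{eq:b^*} handling the ramification curves.
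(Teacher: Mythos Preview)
Your proposal is correct and follows essentially the same route as the paper: Serre duality converts $\Ext^2$ into a $\Hom$, a nonzero map between generically simple torsion-free rank one modules is injective, and the resulting $c_1$ identity $c_1(Q) = \rk(\A)\,K_\A$ contradicts ampleness of $-K_\A$ on $\PP^2$. The only noteworthy difference is that you explicitly flag the torsion-freeness of $\omega_\A \otimes_\A E$ as a point needing verification, whereas the paper simply asserts that $\phi$ is generically bijective; your plan to check this locally via the invertibility of $b^*$ (so that $\omega_\A$ is an invertible $\A$-bimodule and tensoring with it preserves torsion-freeness) is the right way to make this rigorous, and is in fact slightly more careful than the paper's own argument.
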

\begin{proof}
  Assume for contradiction that $\Ext^2_\A(E,F) \neq 0$. Then Serre duality for $\A$-modules
  states that there is a nonzero map
  \begin{equation*}
    \phi: F \to \omega_\A \otimes_\A E.
  \end{equation*}
  Since $E$ and $F$ are generically simple and torsion-free, $\phi$ is generically bijective and therefore
  injective, and its cokernel is a torsion sheaf. This means that the divisor class
  \begin{equation} \label{eq:diff}
    c_1( \omega_\A \otimes_{\A} E) - c_1( F)
  \end{equation}
  is effective. On the other hand, Definition \ref{def:omega_A}, Lemma \ref{ceinsd} and Lemma \ref{lem:K_A} imply that
  \begin{align*}
    c_1( \omega_\A \otimes_{\A} E) & = c_1( \A^{*} \otimes_{\A} E) + \rk(\A) c_1( \omega_{\PP^2})\\
      & = c_1( E) - 2c_1( \A) + \rk(\A) K_{\PP^2}\\
      & = c_1( E) + \rk(\A) K_{\A}.
  \end{align*}
  Hence the class in \eqref{eq:diff} equals $\rk(\A) K_{\A}$.
  But $\A$ is a del Pezzo order, so $-K_\A$ is ample.
  Since $\Pic( \PP^2) = \Z \cdot [\OO( 1)]$, we conclude that $K_\A$ and the class in \eqref{eq:diff}
  are negative multiples of $[\OO( 1)]$, and therefore not effective.
  This contradiction proves $\Ext^2_\A(E,F)=0$.
\end{proof}

\begin{thm}\label{modsmo}
  If $\A$ is a terminal del Pezzo order on $\PP^2$ over $\C$, then the moduli space $M_{\A/\PP^2;c_1,c_2}$
  of torsion-free $\A$-modules of rank one with Chern classes $c_1$ and $c_2$ is smooth.
\end{thm}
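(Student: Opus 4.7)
The plan is to prove smoothness of $M_{\A/\PP^2;c_1,c_2}$ at every closed point via the standard infinitesimal criterion: the obstruction space for deformations of a rank-one torsion-free $\A$-module $E$ vanishes, hence the moduli functor is formally smooth at $[E]$, and therefore the finite-type moduli scheme is smooth.

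First I would set up the relevant deformation theory for left $\A$-modules on $\PP^2$. Given an Artinian local $\C$-algebra $R'$ with maximal ideal $\m'$ and a small extension $R' \twoheadrightarrow R$ whose kernel $I$ satisfies $I \cdot \m' = 0$, a flat family $\E_R$ of torsion-free $\A$-modules of rank one extending $E$ lifts to a family $\E_{R'}$ over $R'$ precisely when an obstruction class in $\Ext^2_\A(E,E) \otimes_\C I$ vanishes; when it does, the set of lifts is a torsor under $\Ext^1_\A(E,E) \otimes_\C I$. This is the usual Illusie-style cotangent-complex calculation transported from $\OO_X$-modules to $\A$-modules, and it applies because $\A$ is a coherent sheaf of algebras on the smooth surface $\PP^2$ and $E$ is $\OO_{\PP^2}$-coherent.

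The second step is to apply Lemma \ref{extvan} with $F = E$. The hypothesis $c_1(E) = c_1(F)$ is automatic, so the lemma yields $\Ext^2_\A(E,E) = 0$ for every torsion-free $\A$-module $E$ of rank one on $\PP^2$. Consequently, at each closed point $[E] \in M_{\A/\PP^2;c_1,c_2}$ the obstruction space vanishes identically; every infinitesimal deformation of $E$ extends, and $\M_{\A/\PP^2;c_1,c_2}$ is formally smooth at $E$. Since the coarse moduli scheme is of finite type over $\C$ and formally smooth at each closed point, it is smooth.

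The main potential obstacle is conceptual rather than computational: one should verify that the obstruction theory one writes down really controls deformations in the moduli problem $\M_{\A/\PP^2;c_1,c_2}$ — in particular, that no finer (e.g.\ trace-free or determinant-fixing) Ext group is needed when one fixes the Chern classes $c_1,c_2$ rather than merely the Hilbert polynomial. But since the full group $\Ext^2_\A(E,E)$ vanishes by Lemma \ref{extvan}, any such refinement is also zero, so no trace-decomposition argument is required, and the conclusion follows directly.
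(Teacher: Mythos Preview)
Your proposal is correct and follows essentially the same route as the paper: apply Lemma~\ref{extvan} with $F=E$ to obtain $\Ext^2_\A(E,E)=0$, and conclude smoothness from the vanishing of the obstruction space. The paper's proof is a two-line version of exactly this argument; your additional remarks on the deformation-theoretic setup and the irrelevance of any trace-free refinement are sound elaborations but not needed beyond what the paper already assumes.
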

\begin{proof}
  Let $E$ be a torsion-free $\A$-module of rank one with Chern classes $c_1$ and $c_2$.
  Then $\Ext^2_\A(E,E)=0$ according to Lemma \ref{extvan}.
  In particular, all obstruction classes in $\Ext^2_\A(E,E)$ vanish.
  This implies that $M_{\A/\PP^2;c_1,c_2}$ is smooth at the point $[E]$.
\end{proof}

\section{Deformations of torsion-free rank one sheaves}\label{sec4}
Let $\A$ be a terminal del Pezzo order of rank $n^2 > 1$ on the projective plane $\PP^2$ over $\C$.
Let $D \subset \PP^2$ denote the ramification divisor. Proposition \ref{prop:delpez} states that
$\A$ has the same ramification index $e$ at every component of $D$. We put $f := n/e$.
\begin{prop} \label{prop:pi'}
  Let $E$ be a locally free left $\A$-module of rank one. Let
  \begin{equation} \label{eq:pi}
    \pi: E \twoheadrightarrow T
  \end{equation}
  be a nonzero quotient of finite length. Then $\pi$ can be deformed to a nonzero quotient
  \begin{equation} \label{eq:pi'}
    \pi': E \twoheadrightarrow T'
  \end{equation}
  of finite length such that the following induced map is not injective:
  \begin{equation} \label{eq:pi_*'}
    \pi_*': \Ext^2_\A( T', E) \to \Ext^2_\A( T', T')
  \end{equation}
\end{prop}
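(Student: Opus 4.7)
The plan is to reduce the global claim to the local deformation results of Section~\ref{sec2} via Serre duality. First I would give a local reformulation of the condition that $\pi_*'$ fails to be injective. Applying Lemma~\ref{serredu} with $i=2$, the dual of $\pi_*'$ can be identified with the precomposition map
\begin{equation*}
  \Hom_\A\bigl(T',\, \omega_\A \otimes_\A T'\bigr) \longrightarrow \Hom_\A\bigl(E,\, \omega_\A \otimes_\A T'\bigr), \qquad f \mapsto f \circ \pi'.
\end{equation*}
This map is always injective, so $\pi_*'$ fails to be injective exactly when the precomposition map fails to be surjective, i.e.\ when some morphism $E \to \omega_\A \otimes_\A T'$ does not factor through $\pi'$. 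Since $T'$ has finite length, both sides decompose as sums over $\supp T'$. At a point $p \in \supp T'$, Lemma~\ref{projfree} yields $\widehat E_p \cong \Ahat_p$, and a local trivialisation of $\omega_{\PP^2}$ identifies $(\omega_\A)_p$ with $\Ahat_p^*$. Using the natural isomorphism $\omega_\A \otimes_\A (\A/I') \cong \A^*/\A^*I'$, a direct calculation then shows that $\pi_*'$ is not injective if and only if
\begin{equation*}
  I'_p \, \Ahat_p^* \;\not\subseteq\; \Ahat_p^*\, I'_p
\end{equation*}
at some point $p \in \supp T'$, where $I'_p := \ker\bigl(\widehat{\pi'_p}: \widehat E_p \twoheadrightarrow \widehat{T'_p}\bigr) \subset \Ahat_p$. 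This is precisely the property produced by the local lemmas of Section~\ref{sec2}.

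With this reformulation in hand, I would proceed by cases according to the type of ramification occurring on $\supp T$. In the first case, $\supp T$ contains some point $p$ at which $\Ahat_p \cong A_{e,e',f}$ has either $e = e' = 1$ (no ramification; note that here $f = n > 1$) or $e > e' = 1$ (smooth ramification). Lemma~\ref{lem:unram_I'} respectively Proposition~\ref{prop:smooth-ram} then deforms the local left ideal $I_p$ to some $I'_p$ satisfying the above inequality. Keeping the quotient unchanged at the remaining points of $\supp T$ globalises this local deformation to the required $\pi'$.

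The remaining case is when $\supp T$ is contained in the finite set of singular ramification points, that is, in the crossings of the components of $D$. As the counterexample $I = \n$ recalled in Section~\ref{sec2} shows, the inequality cannot be produced at such a point purely by a punctual deformation. Instead I would use Lemma~\ref{lem:sing-ram}, according to which the punctual Hilbert scheme $\Hilb_{\Ahat_p}(l_p)$ is connected: first deform $\widehat T_p$ to a direct sum of simple $\Ahat_p$-modules, and then use the two-dimensional deformation space of each such simple (arising from the ramification directions $\{u = 0\}$ and $\{v = 0\}$ through $p$) to slide one simple summand along a component of $D$ off the crossing. Once the support meets a smoothly ramified point, the first case applies.

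The main obstacle is this last step: converting the purely punctual connectedness statement of Lemma~\ref{lem:sing-ram} into an actual support-moving deformation. The punctual Hilbert scheme by definition cannot alter the support, so one has to combine its connectedness with a non-punctual formal deformation along a component of $D$, and check carefully that the resulting family remains flat of constant Hilbert polynomial, so that it defines an honest path in $\Quot_\A(E,l)$.
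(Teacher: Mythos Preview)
Your Serre-duality reformulation and the treatment of the unramified and smoothly ramified cases coincide with the paper's proof: the paper also picks one point $p$ in the support, applies Lemma~\ref{lem:unram_I'} or Proposition~\ref{prop:smooth-ram} to the completed local ideal, and then unwinds $I'A^*\not\subseteq A^*I'$ via the map $\Hom_A(T_p',A^*\otimes_A T_p')\to\Hom_A(A,A^*\otimes_A T_p')$ and Serre duality exactly as you describe.

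The genuine difference is in the singular ramification case. You propose to first use the connectedness of the punctual Hilbert scheme to reach a direct sum of simples, and then to ``slide'' one simple along a branch of $D$; you correctly flag that this second move lies outside the punctual Hilbert scheme and needs an extra argument. The paper bypasses this hands-on construction entirely. It considers the Hilbert--Chow morphism
\[
  \supp:\Quot_\A(E,l)\longrightarrow \Sym^l(\PP^2),
\]
notes that Lemma~\ref{lem:sing-ram} forces $f\mid l$, so by Corollary~\ref{hilbsmo} the fibre over $l\cdot q$ is nonempty for every $q\in D$. One then takes an irreducible component $C$ of $D$ through $p$ (avoiding the other bad points), decomposes $\supp^{-1}(C)$ into connected components $Q_i$, and uses that $\supp$ is projective and $C$ is irreducible to conclude that some $Q_i$ surjects onto $C$. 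Connectedness of the fibre over $l\cdot p$ (Lemma~\ref{lem:sing-ram} again) forces the original quotient $T_p$ to lie in this $Q_i$, so the universal family over $Q_i$ deforms $T_p$ to a quotient supported at a smooth point $q\in D$, after which the first case applies.

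This Hilbert--Chow argument is what is missing from your sketch: it replaces the explicit ``slide a simple along $D$'' construction by a soft topological argument (closed images of projective maps covering an irreducible curve), and it uses the connectedness statement of Lemma~\ref{lem:sing-ram} not to reach a sum of simples, but to guarantee that the starting point lies in the right connected component of the preimage. Your route could in principle be completed, but you would still need to produce a flat family moving the support, and the paper's approach shows this is unnecessary.
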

\begin{proof}
  Choose $p \in \PP^2$ in the support of $T$. As $T$ has finite length, its support is finite, and
  \begin{equation*}
    T = T_p \oplus T_{\neq p}
  \end{equation*}
  where $T_p$ is supported at $p$, and $T_{\neq p}$ is supported outside $p$.
  We distinguish three cases, depending on the ramification of $\A$ at $p$.

  The first case is that $p$ is a smooth point of the ramification divisor $D$.
  Let $A := \Ahat_p$ denote the completion of $\A$ at $p$, that is we have $A\cong A_{e,1,f}$.
  Choosing an isomorphism of completions given by Lemma \ref{projfree}
  \begin{equation} \label{eq:Ehat_p}
    \Ehat_p \cong A,
  \end{equation}
  we can identify the quotient $T_p$ of $\Ehat_p$ with $A/I$ for some left ideal $I \subset A$ of finite colength.
  Proposition \ref{prop:smooth-ram} allows us to deform $I$ to a left ideal $I' \subset A$ of finite colength
  such that
  \begin{equation*}
    I' A^* \not\subseteq A^* I'.   
  \end{equation*}
  Therefore, $T_p$ can be deformed to $T_p' := A/I'$ as a quotient of $A$, and 
  the given quotient $\pi$ in \eqref{eq:pi} can be deformed to the quotient
  \begin{equation*}
    \pi': E \twoheadrightarrow T' := T_p' \oplus T_{\neq p}.
  \end{equation*}
  To prove that $\pi_*'$ in \eqref {eq:pi_*'} is not injective,
  we choose an element $a^* \in A^*$ with $I' a^*  \not\subseteq A^* I'$.
  Then the left $A$-module homomorphism
  \begin{equation*}
    \phi: A \to A^*/A^* I' = A^* \otimes_A T_p', \qquad a \mapsto aa^* + A^* I',
  \end{equation*}
  does not vanish on $I'$, and hence does not factor through $A/I' = T_p'$. Therefore, the map
  \begin{equation*}
    \Hom_A( T_p', A^* \otimes_A T_p') \to \Hom_A( A, A^* \otimes_A T_p')
  \end{equation*}
  induced by the projection $A \twoheadrightarrow T_p'$ is not surjective, as its image does not contain $\phi$.
  Using the identification \eqref{eq:Ehat_p} and the decomposition $T' = T_p' \oplus T_{\neq p}$, we conclude that
  \begin{equation*}
    (\pi')^*: \Hom_{\A}( T', \omega_{\A} \otimes_{\A} T') \to \Hom_{\A}( E, \omega_{\A} \otimes_{\A} T')
  \end{equation*}
  is not surjective. Hence the map $\pi_*'$ in \eqref{eq:pi_*'} is not injective, by Serre duality for $\A$-modules.

  The second case is that $\A$ is unramified at $p$. This case is simpler than the first case.
  However, the same argument works, using Lemma \ref{lem:unram_I'} instead of Proposition \ref{prop:smooth-ram}.

  The third case is that $p$ lies in the singular locus $D^{\sing}$ of the ramification divisor $D$.
  Let $l$ be the $\OO_{\PP^2}$-length of $T_p$. Then $\pi_p: E \twoheadrightarrow T_p$ defines a point in the scheme
  \begin{equation*}
    \Quot_{\A}(E,l)
  \end{equation*}
  that classifies left $\A$-module quotients of $E$ with $\OO_{\PP^2}$-length $l$.
  This is a closed subscheme of $\Quot_{\OO_{\PP^2}}(E, l)$, and hence projective over $\C$.
  It comes with a Hilbert-Chow morphism
  \begin{equation} \label{eq:supp}
    \supp: \Quot_{\A}(E,l) \to \Sym^l( \PP^2),
  \end{equation}
  whose fiber over $l \cdot q$ for $q \in \PP^2$ is the punctual Hilbert scheme for the completion $\Ahat_q$:
  \begin{equation} \label{eq:supp(lq)}
    \supp^{-1}( l \cdot q) = \Hilb_{\Ahat_q}( l).
  \end{equation}
  For $q = p$, this fiber contains the point $T_p$, and is therefore non-empty.
  Using Lemma \ref{lem:sing-ram}, we conclude that $f$ divides $l$.
  Hence \eqref{eq:supp(lq)} is non-empty for each ramified point $q \in D$ by Corollary \ref{hilbsmo}.
  In other words, the image of the morphism $\supp$ in \eqref{eq:supp} contains the diagonally embedded
  \begin{equation*}
    D \subset \PP^2 \hookrightarrow \Sym^l( \PP^2).
  \end{equation*}
  Let $\Delta \subset D$ be the finite set of all points $q \neq p$ in $D^{\sing}$ or in the support of $T$.
  Choose an irreducible component $C \subseteq D \setminus \Delta$ with $p \in C$. Let $Q_i$ be the connected
  components of
  \begin{equation*}
    \supp^{-1}( C) \subseteq \Quot_{\A}(E,l).
  \end{equation*}
  Since the morphism $\supp$ in \eqref{eq:supp} is projective, the image $\supp( Q_i)$ is closed in $C$.
  But the union of these images is all of $C$, which is irreducible. Hence
  \begin{equation*}
    \supp( Q_i) = C
  \end{equation*}
  for some such connected component $Q_i$. Since $\supp^{-1}( l \cdot p)$ is connected by Lemma \ref{lem:sing-ram},
  and intersects $Q_i$ by construction, it is contained in $Q_i$. In particular, the point given by
  \begin{equation} \label{eq:T_p}
    \pi_p: E \twoheadrightarrow T_p
  \end{equation}
  lies in $Q_i$. Now choose a point $q \neq p$ in $C$, and a quotient
  \begin{equation} \label{eq:T_q'}
    \pi_q': E \twoheadrightarrow T_q'
  \end{equation}
  corresponding to a point in $Q_i$ over $q$. The restriction of the universal quotient to
  \begin{equation*}
    Q_i \subset \Quot_{\A}(E,l)
  \end{equation*}
  provides a deformation of the quotient \eqref{eq:T_p} to the quotient \eqref{eq:T_q'}.
  Since $\supp( Q_i) = C \subset \PP^2$ does not intersect the support of $T_{\neq p}$,
  we can take the direct sum with the component
  \begin{equation*}
    \pi_{\neq p}: E \twoheadrightarrow T_{\neq p}
  \end{equation*}
  of $\pi$ to obtain a deformation of the given quotient \eqref{eq:pi} to the quotient
  \begin{equation*}
    \pi_q' \oplus \pi_{\neq p}: E \twoheadrightarrow T_q' \oplus T_{\neq p}.
  \end{equation*}
  As the support of this quotient contains the point $q \in D \setminus D^{\sing}$, we can 
  apply the first case treated above to deform it further to a quotient \eqref{eq:pi'} with the required property.
\end{proof}

\begin{thm} \label{thm:main}
  Let $\A \neq \OO_{\PP^2}$ be a terminal del Pezzo order on $\PP^2$ over $\C$.
  Then every torsion-free $\A$-module $E$ of rank one can be deformed to a locally free $\A$-module $E'$.
\end{thm}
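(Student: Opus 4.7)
The plan is to proceed by induction on the $\OO_{\PP^2}$-length $l$ of the cokernel of the natural inclusion of $E$ into its $\OO_{\PP^2}$-reflexive hull
\[
E^{**} := \HHom_{\OO_{\PP^2}}\bigl(\HHom_{\OO_{\PP^2}}(E, \OO_{\PP^2}), \OO_{\PP^2}\bigr).
\]
Since $E$ is torsion-free of $\A$-rank one, $E^{**}$ is again a rank one left $\A$-module, $\OO_{\PP^2}$-locally free and hence $\A$-locally free by Lemma \ref{projfree}, and the quotient $T := E^{**}/E$ is a torsion $\A$-module of finite length $l \geq 0$. The case $l = 0$ is the base case, in which $E = E^{**}$ is already locally free and there is nothing to prove.

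For the inductive step, assume $l > 0$. It suffices to construct a deformation of $E$ to some torsion-free rank one $\A$-module $E_2$ with $\text{length}_{\OO_{\PP^2}}(E_2^{**}/E_2) < l$, since the induction hypothesis applied to $E_2$ together with transitivity of ``lying in the same connected component of the moduli'' will then finish the argument. Apply Proposition \ref{prop:pi'} to the locally free $\A$-module $E^{**}$ and the quotient $\pi : E^{**} \twoheadrightarrow T$. This yields a deformation $\pi' : E^{**} \twoheadrightarrow T'$ of $\pi$, again of length $l$, such that
\[
\pi'_* : \Ext^2_{\A}(T', E^{**}) \longrightarrow \Ext^2_{\A}(T', T')
\]
fails to be injective. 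Since the kernel of a surjection with flat quotient is flat, the corresponding family of kernels inside $\Quot_{\A}(E^{**}, l)$ deforms $E = \ker(\pi)$ to $E_1 := \ker(\pi')$, placing $[E]$ and $[E_1]$ in the same connected component of $M_{\A/\PP^2;c_1,c_2}$.

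It remains to deform $E_1$ further so as to strictly decrease $l$. Applying $\Hom_{\A}(T', -)$ to $0 \to E_1 \to E^{**} \to T' \to 0$, the non-injectivity of $\pi'_*$ at once forces $\Ext^2_{\A}(T', E_1) \neq 0$. Applying instead $\Hom_{\A}(-, E_1)$ to the same short exact sequence, and using Lemma \ref{extvan} to conclude $\Ext^2_{\A}(E^{**}, E_1) = 0$ (which is legitimate because $T'$ has zero-dimensional support, so $c_1(E_1) = c_1(E^{**})$), one identifies $\Ext^2_{\A}(T', E_1)$ with the cokernel of
\[
\Ext^1_{\A}(E^{**}, E_1) \longrightarrow \Ext^1_{\A}(E_1, E_1).
\]
Hence there are first-order deformations of $E_1$ as an $\A$-module whose class does not come from $\Ext^1_{\A}(E^{**}, E_1)$. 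By the smoothness of the moduli space (Theorem \ref{modsmo}), every such first-order deformation integrates to an actual deformation, and by upper semi-continuity of $\text{length}_{\OO_{\PP^2}}\mathcal{E}xt^1_{\OO_{\PP^2}}(-, \OO_{\PP^2})$ in flat families of torsion-free rank one sheaves, a generic nearby fiber $E_2$ satisfies $\text{length}_{\OO_{\PP^2}}(E_2^{**}/E_2) < l$, as required.

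The step I expect to be the main obstacle is precisely this final one: verifying that a first-order deformation class of $E_1$ whose image in $\Ext^2_{\A}(T', E_1)$ is nonzero really produces nearby fibers with strictly smaller reflexive-hull quotient, rather than merely moving $E^{**}$ within its own moduli of locally free rank one $\A$-modules while keeping $l$ constant. The specific form of the non-injectivity in Proposition \ref{prop:pi'} -- which under Serre duality corresponds to a nonzero homomorphism $E_1 \to \omega_{\A} \otimes_{\A} T'$ that does not extend to $E^{**}$ -- should be exactly the feature needed to rule out this length-preserving alternative.
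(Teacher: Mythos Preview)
Your overall architecture matches the paper's proof exactly: induction on $l = \mathrm{length}(E^{**}/E)$, use Proposition~\ref{prop:pi'} to replace $E$ by $E_1 = \ker(\pi')$, then find a first-order deformation class $\gamma \in \Ext^1_{\A}(E_1,E_1)$ that forces the length to drop, and integrate it using $\Ext^2_{\A}(E_1,E_1)=0$.

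The gap is precisely where you flag it, and it is genuine. Upper semi-continuity of $\mathrm{length}\,\mathcal{E}xt^1_{\OO_{\PP^2}}(-,\OO_{\PP^2})$ only tells you $l$ does not \emph{increase}; it cannot give a strict decrease. Your criterion ``$\gamma$ is not in the image of $\Ext^1_{\A}(E^{**},E_1)$'' is also too weak: that image is the tangent space to deformations of $E_1$ inside a \emph{fixed} $E^{**}$, but the stratum $\{l = \mathrm{const}\}$ in the moduli space is a priori larger, since the reflexive hull may itself move. So neither semi-continuity nor your tangent-space observation rules out that every nearby fiber still has length $l$.

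The paper closes this gap with two extra ingredients you do not invoke. First, it tracks not just $\partial(\gamma) \in \Ext^2_{\A}(T',E_1)$ but its further image $\iota_*\partial(\gamma) \in \Ext^2_{\A}(T',E^{**})$, which is nonzero because $\iota_*$ has image equal to $\ker(\pi'_*) \neq 0$ and $\partial$ is surjective. Second, and crucially, it uses Lemma~\ref{forgetin} to see that this class remains nonzero after applying the forgetful functor to $\Ext^2_{\OO_{\PP^2}}(T',E^{**})$, and then invokes Artamkin's result \cite[Corollary~1.3]{artam}: a class $\gamma \in \Ext^1_{\OO_{\PP^2}}(E_1,E_1)$ with nonzero image in $\Ext^2_{\OO_{\PP^2}}(T',E^{**})$ is exactly the obstruction to lifting the quotient $E^{**} \twoheadrightarrow T'$ along the deformation, and forces the generic fiber to have strictly smaller double-dual quotient. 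This is the ``length-preserving alternative is ruled out'' statement you were looking for; it is a theorem, not a consequence of semi-continuity.
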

\begin{proof}
  We adapt the proof of \cite[Theorem 3.6.(iii)]{hst} and start with the exact sequence
  \begin{equation}\label{seq1}\begin{CD}
    0 @>>> E @>\iota>> E^{**} @>\pi>> T @>>> 0
  \end{CD}\end{equation}
  induced by $E$. The functor $\Hom_{\A}(T, \_)$ turns \eqref{seq1} into the long exact sequence
  \begin{equation*}\begin{CD}
    \ldots @>>> \Ext_{\A}^2(T, E) @>\iota_{*}>> \Ext_{\A}^2(T, E^{**}) @>\pi_{*}>> \Ext_{\A}^2(T, T) @>>> 0.
  \end{CD}\end{equation*} 
  Applying Proposition \ref{prop:pi'} to the quotient $\pi: E^{**} \twoheadrightarrow T$,
  and replacing $E$ by the kernel of the resulting deformed quotient $\pi': E^{**} \twoheadrightarrow T'$,
  we may assume that $\pi_*$ is not injective. Then $\iota_* \neq 0$.
  The functor $\Hom_{\A}(\_, E)$ turns \eqref{seq1} into the long exact sequence
  \begin{equation*}\begin{CD}
    \ldots @>>> \Ext_\A^1(E, E) @>\partial>> \Ext_\A^2(T, E) @>>> \Ext_{\A}^2(E^{**}, E) @>>> \ldots
  \end{CD}\end{equation*}
  whose connecting homomorphism $\partial$ is surjective by Lemma \ref{extvan}. Hence the composition
  \begin{equation*}\begin{CD}
    \Ext_\A^1(E, E) @>\partial>> \Ext_\A^2(T, E) @>\iota_{*}>> \Ext_{\A}^2(T, E^{**})
  \end{CD}\end{equation*}
  is nonzero. We choose a class $\gamma \in \Ext_\A^1(E, E)$ whose image in $\Ext_\A^2(T, E^{**})$ is nonzero.
  The infinitesimal deformation of $E$ given by $\gamma$ can be extended to a deformation $\E$ of $E$
  over a smooth connected curve $C$, since $\Ext^2_{\A}( E, E) = 0$ by Lemma \ref{extvan}.

  Let $E'$ be the fiber of $\E$ over a general point of $C$. Lemma \ref{forgetin} states that the forgetful functor
  induces an injective map
  \begin{equation*}
    \Ext_\A^2(T, E^{**}) \hookrightarrow \Ext_{\OO_{\PP^2}}^2(T, E^{**}).
  \end{equation*}
  So the class $\gamma$, seen as an element in $\Ext_{\OO_{\PP^2}}^1(E, E)$,
  has nonzero image in $\Ext_{\OO_{\PP^2}}^2(T, E^{**})$.

  We can thus use a result of Artamkin, which says that the length of $(E')^{**}/E'$
  is strictly smaller than the length of $E^{**}/E$, see \cite[Corollary 1.3]{artam}.
  Using induction over this length, we may assume that $E'$ can already be deformed to a locally free $\A$-module. 
\end{proof}

\begin{cor}
  Every irreducible component of the moduli space $M_{\A/\PP^2;c_1,c_2}$
  contains a point defined by a locally free $\A$-module.
\end{cor}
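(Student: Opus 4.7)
The plan is to combine Theorem \ref{thm:main} with the smoothness of the moduli space established in Theorem \ref{modsmo}. First I would fix an arbitrary irreducible component $Z \subseteq M_{\A/\PP^2;c_1,c_2}$ and choose any $\C$-point $[E] \in Z$; by construction $E$ is a torsion-free $\A$-module of rank one with Chern classes $c_1$ and $c_2$.

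Next I would apply Theorem \ref{thm:main} to produce a deformation of $E$ to a locally free $\A$-module $E'$. Unpacking the notion of deformation used earlier in the paper, this furnishes a family $\E$ of torsion-free $\A$-modules of rank one over some connected $\C$-scheme $T$, together with points $t_0, t_1 \in T( \C)$ such that $\E_{t_0} \cong E$ and $\E_{t_1} \cong E'$. Because Chern classes are locally constant in flat families of coherent sheaves on $\PP^2$, every fiber of $\E$ has the same Chern classes $c_1, c_2$ as $E$, so the family defines a classifying morphism
\begin{equation*}
  \varphi: T \to M_{\A/\PP^2;c_1,c_2}
\end{equation*}
with $\varphi( t_0) = [E]$ and $\varphi( t_1) = [E']$.

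Finally, the connectedness of $T$ forces $\varphi( T)$ to lie in a single connected component of the moduli space. By Theorem \ref{modsmo}, $M_{\A/\PP^2;c_1,c_2}$ is smooth over $\C$ and hence regular, so every local ring is an integral domain and each point lies in a unique irreducible component; connected components therefore coincide with irreducible components. Consequently $[E']$ belongs to the same irreducible component $Z$ as $[E]$, which proves the claim. I do not anticipate a serious obstacle: all of the substantive work is carried by Theorem \ref{thm:main}, and the remaining ingredients are the standard facts that smooth schemes are regular and that Chern classes are constant in flat families, ensuring that $\varphi$ indeed takes values in the fixed Chern-class moduli space.
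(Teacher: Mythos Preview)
Your proposal is correct and follows essentially the same approach as the paper: apply Theorem \ref{thm:main} to land in the same connected component, then invoke the smoothness from Theorem \ref{modsmo} to conclude that connected components coincide with irreducible components. The paper's version is simply terser, compressing your classifying-morphism and constancy-of-Chern-classes remarks into the single clause ``by Theorem \ref{thm:main}.''
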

\begin{proof}
  Every connected component of $M_{\A/\PP^2;c_1,c_2}$ contains such a point by Theorem \ref{thm:main}.
  But these connected components are smooth by Theorem \ref{modsmo}, and hence irreducible.
\end{proof}

\begin{cor}
  The open locus $M_{\A/\PP^2;c_1,c_2}^{\lf}$ of locally free $\A$-modules is dense in $M_{\A/X;c_1,c_2}$.
\end{cor}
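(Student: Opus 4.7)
The plan is to combine openness of the locally free locus with the previous corollary and the smoothness of the moduli space.

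First I would verify that $M_{\A/\PP^2;c_1,c_2}^{\lf}$ is an open subset of $M_{\A/\PP^2;c_1,c_2}$. In any flat family $\E$ of coherent sheaves on the smooth projective surface $\PP^2$, the locus in the base where the fiber is locally free as an $\OO_{\PP^2}$-module is open; this is the usual semicontinuity statement coming from the fact that a coherent sheaf is locally free iff it has projective dimension zero at every point. By Lemma \ref{projfree}, a torsion-free $\A$-module of rank one is locally free over $\A$ if and only if it is locally free over $\OO_{\PP^2}$, so the $\A$-locally free locus coincides with the $\OO_{\PP^2}$-locally free locus and is therefore open.

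Next I would invoke Theorem \ref{modsmo}, which states that $M_{\A/\PP^2;c_1,c_2}$ is smooth over $\C$. Smoothness is preserved under taking connected components, and a smooth connected scheme over $\C$ is irreducible. Consequently, each connected component of $M_{\A/\PP^2;c_1,c_2}$ coincides with an irreducible component, and each such component is irreducible as a scheme.

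Finally, by the preceding corollary, every irreducible component of $M_{\A/\PP^2;c_1,c_2}$ contains at least one point corresponding to a locally free $\A$-module, i.e.\ the open subset $M_{\A/\PP^2;c_1,c_2}^{\lf}$ meets each irreducible component nontrivially. Since a nonempty open subset of an irreducible scheme is dense, $M_{\A/\PP^2;c_1,c_2}^{\lf}$ is dense in every irreducible component, and hence dense in $M_{\A/\PP^2;c_1,c_2}$. No step here looks difficult, since all of the hard work has already been done in Theorem \ref{thm:main} and Theorem \ref{modsmo}; the only point that deserves a sentence is recalling the openness of local freeness in a flat family together with Lemma \ref{projfree}.
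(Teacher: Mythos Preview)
Your proposal is correct and matches the paper's approach: the paper gives no proof for this corollary, treating it as immediate from the previous corollary together with the standard fact that a nonempty open subset of an irreducible scheme is dense. Your additional remarks on openness of the locally free locus via Lemma \ref{projfree} and on the identification of connected with irreducible components via Theorem \ref{modsmo} simply spell out what the paper leaves implicit.
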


\bibliography{Artikel}

\begin{thebibliography}{Mat89}

\bibitem[Art91]{artam}
Igor Artamkin.
\newblock Deforming torsion-free sheaves on an algebraic surface.
\newblock {\em Mathematics of the USSR-Izvestiya}, 36(3):449--486, 1991.

\bibitem[CC15]{chan6}
Daniel Chan and Kenneth Chan.
\newblock Rational curves and ruled orders on surfaces.
\newblock {\em Journal of Algebra}, 435:52--87, 2015.

\bibitem[CI05]{chan2}
Daniel Chan and Colin Ingalls.
\newblock The minimal model program for orders over surfaces.
\newblock {\em Inventiones mathematicae}, 161:427--452, 2005.

\bibitem[CK03]{chan5}
Daniel Chan and Rajesh Kulkarni.
\newblock Del {P}ezzo orders on surfaces.
\newblock {\em Advances in Mathematics}, 173:144--177, 2003.

\bibitem[EL99]{lehn2}
Geir Ellingsrud and Manfred Lehn.
\newblock Irreducibility of the punctual quotient scheme of a surface.
\newblock {\em Arkiv för Matematik}, 37(2):245--254, 1999.

\bibitem[HS05]{hst}
Norbert Hoffmann and Ulrich Stuhler.
\newblock Moduli schemes of generically simple {A}zumaya modules.
\newblock {\em Documenta Mathematica}, 10:369--389, 2005.

\bibitem[Mat89]{mats}
Hideyuki Matsumura.
\newblock {\em Commutative ring theory}.
\newblock Cambridge University Press, 1989.

\bibitem[Ree13]{reede}
Fabian Reede.
\newblock {\em Moduli spaces of bundles over two-dimensional orders}.
\newblock PhD thesis, Mathematisches Institut der Georg-August-Universitaet,
  Goettingen, 2013.
\newblock URL: \url{http://hdl.handle.net/11858/00-1735-0000-001A-7778-7}.

\bibitem[Tan81]{tannen}
Allen Tannenbaum.
\newblock The {B}rauer group and unirationality : An example of
  {A}rtin-{M}umford.
\newblock In Michel Kervaire and Manuel Ojanguren, editors, {\em Groupe de
  Brauer}, pages 103--128. Springer-Verlag, 1981.

\end{thebibliography}
\bibliographystyle{alphaurl}

\end{document}